    \name{labelname}{1}{}{%
      {{hash=cea551bb330b7b014099e980f4c03ea1}{Betten}{B\bibinitperiod}{Anton}{A\bibinitperiod}{}{}{}{}}%
    }
    \name{author}{1}{}{%
      {{hash=cea551bb330b7b014099e980f4c03ea1}{Betten}{B\bibinitperiod}{Anton}{A\bibinitperiod}{}{}{}{}}%
    }
    \name{labelname}{5}{}{%
      {{hash=6549fe610fc1d75fcf918a8b6ee89636}{Braun}{B\bibinitperiod}{Michael}{M\bibinitperiod}{}{}{}{}}%
      {{hash=6a22a08bbb825c8b7a4a083e33da919f}{Etzion}{E\bibinitperiod}{Tuvi}{T\bibinitperiod}{}{}{}{}}%
      {{hash=9196576da65acf564c9a59014c5cf50f}{Östergård}{Ö\bibinitperiod}{Patric}{P\bibinitperiod}{}{}{}{}}%
      {{hash=88849df6131174fb81130678bf6dcee9}{Vardy}{V\bibinitperiod}{Alexander}{A\bibinitperiod}{}{}{}{}}%
      {{hash=aed8718b02635a86c5d773dde179e4aa}{Wassermann}{W\bibinitperiod}{Alfred}{A\bibinitperiod}{}{}{}{}}%
    }
    \name{author}{5}{}{%
      {{hash=6549fe610fc1d75fcf918a8b6ee89636}{Braun}{B\bibinitperiod}{Michael}{M\bibinitperiod}{}{}{}{}}%
      {{hash=6a22a08bbb825c8b7a4a083e33da919f}{Etzion}{E\bibinitperiod}{Tuvi}{T\bibinitperiod}{}{}{}{}}%
      {{hash=9196576da65acf564c9a59014c5cf50f}{Östergård}{Ö\bibinitperiod}{Patric}{P\bibinitperiod}{}{}{}{}}%
      {{hash=88849df6131174fb81130678bf6dcee9}{Vardy}{V\bibinitperiod}{Alexander}{A\bibinitperiod}{}{}{}{}}%
      {{hash=aed8718b02635a86c5d773dde179e4aa}{Wassermann}{W\bibinitperiod}{Alfred}{A\bibinitperiod}{}{}{}{}}%
    }
    \name{labelname}{3}{}{%
      {{hash=6549fe610fc1d75fcf918a8b6ee89636}{Braun}{B\bibinitperiod}{Michael}{M\bibinitperiod}{}{}{}{}}%
      {{hash=0122c348d5f13260352a493cfafd58f0}{Kerber}{K\bibinitperiod}{Adalbert}{A\bibinitperiod}{}{}{}{}}%
      {{hash=16ca3b040febaaeee8b44ae23e5bf070}{Laue}{L\bibinitperiod}{Reinhard}{R\bibinitperiod}{}{}{}{}}%
    }
    \name{author}{3}{}{%
      {{hash=6549fe610fc1d75fcf918a8b6ee89636}{Braun}{B\bibinitperiod}{Michael}{M\bibinitperiod}{}{}{}{}}%
      {{hash=0122c348d5f13260352a493cfafd58f0}{Kerber}{K\bibinitperiod}{Adalbert}{A\bibinitperiod}{}{}{}{}}%
      {{hash=16ca3b040febaaeee8b44ae23e5bf070}{Laue}{L\bibinitperiod}{Reinhard}{R\bibinitperiod}{}{}{}{}}%
    }
    \name{labelname}{1}{}{%
      {{hash=70a6feeea303b3018ea81c3e36dc67b1}{Cameron}{C\bibinitperiod}{Peter\bibnamedelima J.}{P\bibinitperiod\bibinitdelim J\bibinitperiod}{}{}{}{}}%
    }
    \name{author}{1}{}{%
      {{hash=70a6feeea303b3018ea81c3e36dc67b1}{Cameron}{C\bibinitperiod}{Peter\bibnamedelima J.}{P\bibinitperiod\bibinitdelim J\bibinitperiod}{}{}{}{}}%
    }
    \name{editor}{2}{}{%
      {{hash=dd3a501feacb844be1893b6a96b98903}{McDonough}{M\bibinitperiod}{T.\bibnamedelimi P.}{T\bibinitperiod\bibinitdelim P\bibinitperiod}{}{}{}{}}%
      {{hash=eb6031901dafd441bda04fb6b499afbd}{Mavron}{M\bibinitperiod}{V.\bibnamedelimi C.}{V\bibinitperiod\bibinitdelim C\bibinitperiod}{}{}{}{}}%
    }
    \name{labelname}{1}{}{%
      {{hash=8420f1b740d312727ce7e835fb95955f}{Cohn}{C\bibinitperiod}{Henry}{H\bibinitperiod}{}{}{}{}}%
    }
    \name{author}{1}{}{%
      {{hash=8420f1b740d312727ce7e835fb95955f}{Cohn}{C\bibinitperiod}{Henry}{H\bibinitperiod}{}{}{}{}}%
    }
    \name{labelname}{2}{}{%
      {{hash=641c8ce4ccb4b31a664bd6f988eb791a}{Colbourn}{C\bibinitperiod}{Charles\bibnamedelima J.}{C\bibinitperiod\bibinitdelim J\bibinitperiod}{}{}{}{}}%
      {{hash=0d532752fffb2b76133977929cbf9f30}{Mathon}{M\bibinitperiod}{Rudolf}{R\bibinitperiod}{}{}{}{}}%
    }
    \name{author}{2}{}{%
      {{hash=641c8ce4ccb4b31a664bd6f988eb791a}{Colbourn}{C\bibinitperiod}{Charles\bibnamedelima J.}{C\bibinitperiod\bibinitdelim J\bibinitperiod}{}{}{}{}}%
      {{hash=0d532752fffb2b76133977929cbf9f30}{Mathon}{M\bibinitperiod}{Rudolf}{R\bibinitperiod}{}{}{}{}}%
    }
    \name{editor}{2}{}{%
      {{hash=641c8ce4ccb4b31a664bd6f988eb791a}{Colbourn}{C\bibinitperiod}{Charles\bibnamedelima J.}{C\bibinitperiod\bibinitdelim J\bibinitperiod}{}{}{}{}}%
      {{hash=6f42777add1c0971ddc43fff62b3973b}{Dinitz}{D\bibinitperiod}{Jeffrey\bibnamedelima H.}{J\bibinitperiod\bibinitdelim H\bibinitperiod}{}{}{}{}}%
    }
    \name{labelname}{1}{}{%
      {{hash=5cde8cba66ddb514033caf90a995b203}{Dehon}{D\bibinitperiod}{Michel}{M\bibinitperiod}{}{}{}{}}%
    }
    \name{author}{1}{}{%
      {{hash=5cde8cba66ddb514033caf90a995b203}{Dehon}{D\bibinitperiod}{Michel}{M\bibinitperiod}{}{}{}{}}%
    }
    \name{labelname}{1}{}{%
      {{hash=0beb1dd2ed421235b4823306c3e60eb1}{Delsarte}{D\bibinitperiod}{Philippe}{P\bibinitperiod}{}{}{}{}}%
    }
    \name{author}{1}{}{%
      {{hash=0beb1dd2ed421235b4823306c3e60eb1}{Delsarte}{D\bibinitperiod}{Philippe}{P\bibinitperiod}{}{}{}{}}%
    }
    \name{labelname}{3}{}{%
      {{hash=35d00fe5c2284a7ec5e658141a33ed4b}{Fazeli}{F\bibinitperiod}{Arman}{A\bibinitperiod}{}{}{}{}}%
      {{hash=8f54cfe313362d53cf0715d5cf0b38e2}{Lovett}{L\bibinitperiod}{Shachar}{S\bibinitperiod}{}{}{}{}}%
      {{hash=88849df6131174fb81130678bf6dcee9}{Vardy}{V\bibinitperiod}{Alexander}{A\bibinitperiod}{}{}{}{}}%
    }
    \name{author}{3}{}{%
      {{hash=35d00fe5c2284a7ec5e658141a33ed4b}{Fazeli}{F\bibinitperiod}{Arman}{A\bibinitperiod}{}{}{}{}}%
      {{hash=8f54cfe313362d53cf0715d5cf0b38e2}{Lovett}{L\bibinitperiod}{Shachar}{S\bibinitperiod}{}{}{}{}}%
      {{hash=88849df6131174fb81130678bf6dcee9}{Vardy}{V\bibinitperiod}{Alexander}{A\bibinitperiod}{}{}{}{}}%
    }
    \name{labelname}{1}{}{%
      {{hash=c1cddeab124d7f9f091ea2bdf79f4246}{Fray}{F\bibinitperiod}{Robert\bibnamedelima D.}{R\bibinitperiod\bibinitdelim D\bibinitperiod}{}{}{}{}}%
    }
    \name{author}{1}{}{%
      {{hash=c1cddeab124d7f9f091ea2bdf79f4246}{Fray}{F\bibinitperiod}{Robert\bibnamedelima D.}{R\bibinitperiod\bibinitdelim D\bibinitperiod}{}{}{}{}}%
    }
    \name{labelname}{2}{}{%
      {{hash=87bbdb2f9bd7b202642421782de5a682}{Goldman}{G\bibinitperiod}{Jay}{J\bibinitperiod}{}{}{}{}}%
      {{hash=95d92b37634ea653c838623f322b4704}{Rota}{R\bibinitperiod}{Gian-Carlo}{G\bibinithyphendelim C\bibinitperiod}{}{}{}{}}%
    }
    \name{author}{2}{}{%
      {{hash=87bbdb2f9bd7b202642421782de5a682}{Goldman}{G\bibinitperiod}{Jay}{J\bibinitperiod}{}{}{}{}}%
      {{hash=95d92b37634ea653c838623f322b4704}{Rota}{R\bibinitperiod}{Gian-Carlo}{G\bibinithyphendelim C\bibinitperiod}{}{}{}{}}%
    }
    \name{labelname}{1}{}{%
      {{hash=ae1bfc7e6623470222c9866f6495b718}{Jackson}{J\bibinitperiod}{F.\bibnamedelimi H.}{F\bibinitperiod\bibinitdelim H\bibinitperiod}{}{}{}{}}%
    }
    \name{author}{1}{}{%
      {{hash=ae1bfc7e6623470222c9866f6495b718}{Jackson}{J\bibinitperiod}{F.\bibnamedelimi H.}{F\bibinitperiod\bibinitdelim H\bibinitperiod}{}{}{}{}}%
    }
    \name{labelname}{2}{}{%
      {{hash=f8b9b87c2af2939038b2d0964ce2ee8d}{Khosrovshahi}{K\bibinitperiod}{Gholamreza\bibnamedelima R.}{G\bibinitperiod\bibinitdelim R\bibinitperiod}{}{}{}{}}%
      {{hash=16ca3b040febaaeee8b44ae23e5bf070}{Laue}{L\bibinitperiod}{Reinhard}{R\bibinitperiod}{}{}{}{}}%
    }
    \name{author}{2}{}{%
      {{hash=f8b9b87c2af2939038b2d0964ce2ee8d}{Khosrovshahi}{K\bibinitperiod}{Gholamreza\bibnamedelima R.}{G\bibinitperiod\bibinitdelim R\bibinitperiod}{}{}{}{}}%
      {{hash=16ca3b040febaaeee8b44ae23e5bf070}{Laue}{L\bibinitperiod}{Reinhard}{R\bibinitperiod}{}{}{}{}}%
    }
    \name{editor}{2}{}{%
      {{hash=641c8ce4ccb4b31a664bd6f988eb791a}{Colbourn}{C\bibinitperiod}{Charles\bibnamedelima J.}{C\bibinitperiod\bibinitdelim J\bibinitperiod}{}{}{}{}}%
      {{hash=6f42777add1c0971ddc43fff62b3973b}{Dinitz}{D\bibinitperiod}{Jeffrey\bibnamedelima H.}{J\bibinitperiod\bibinitdelim H\bibinitperiod}{}{}{}{}}%
    }
    \name{labelname}{2}{}{%
      {{hash=551db6dc06b20e8f002c6219bfc3e3a6}{Kiermaier}{K\bibinitperiod}{Michael}{M\bibinitperiod}{}{}{}{}}%
      {{hash=16ca3b040febaaeee8b44ae23e5bf070}{Laue}{L\bibinitperiod}{Reinhard}{R\bibinitperiod}{}{}{}{}}%
    }
    \name{author}{2}{}{%
      {{hash=551db6dc06b20e8f002c6219bfc3e3a6}{Kiermaier}{K\bibinitperiod}{Michael}{M\bibinitperiod}{}{}{}{}}%
      {{hash=16ca3b040febaaeee8b44ae23e5bf070}{Laue}{L\bibinitperiod}{Reinhard}{R\bibinitperiod}{}{}{}{}}%
    }
    \name{labelname}{1}{}{%
      {{hash=efdc916d8963bb62a7c2f99905c69c42}{Köhler}{K\bibinitperiod}{Egmont}{E\bibinitperiod}{}{}{}{}}%
    }
    \name{author}{1}{}{%
      {{hash=efdc916d8963bb62a7c2f99905c69c42}{Köhler}{K\bibinitperiod}{Egmont}{E\bibinitperiod}{}{}{}{}}%
    }
    \name{labelname}{2}{}{%
      {{hash=0d532752fffb2b76133977929cbf9f30}{Mathon}{M\bibinitperiod}{Rudolf}{R\bibinitperiod}{}{}{}{}}%
      {{hash=3c9dde3454b12dd27f7bead6e711fc7d}{Rosa}{R\bibinitperiod}{Alexander}{A\bibinitperiod}{}{}{}{}}%
    }
    \name{author}{2}{}{%
      {{hash=0d532752fffb2b76133977929cbf9f30}{Mathon}{M\bibinitperiod}{Rudolf}{R\bibinitperiod}{}{}{}{}}%
      {{hash=3c9dde3454b12dd27f7bead6e711fc7d}{Rosa}{R\bibinitperiod}{Alexander}{A\bibinitperiod}{}{}{}{}}%
    }
    \name{editor}{2}{}{%
      {{hash=641c8ce4ccb4b31a664bd6f988eb791a}{Colbourn}{C\bibinitperiod}{Charles\bibnamedelima J.}{C\bibinitperiod\bibinitdelim J\bibinitperiod}{}{}{}{}}%
      {{hash=6f42777add1c0971ddc43fff62b3973b}{Dinitz}{D\bibinitperiod}{Jeffrey\bibnamedelima H.}{J\bibinitperiod\bibinitdelim H\bibinitperiod}{}{}{}{}}%
    }
    \name{labelname}{1}{}{%
      {{hash=239992be53e9a78bde621900a78d976d}{Mendelsohn}{M\bibinitperiod}{Nathan\bibnamedelima S.}{N\bibinitperiod\bibinitdelim S\bibinitperiod}{}{}{}{}}%
    }
    \name{author}{1}{}{%
      {{hash=239992be53e9a78bde621900a78d976d}{Mendelsohn}{M\bibinitperiod}{Nathan\bibnamedelima S.}{N\bibinitperiod\bibinitdelim S\bibinitperiod}{}{}{}{}}%
    }
    \name{editor}{1}{}{%
      {{hash=92e29cd766b2f5a334e6597990136a86}{Mirsky}{M\bibinitperiod}{L.}{L\bibinitperiod}{}{}{}{}}%
    }
    \name{labelname}{1}{}{%
      {{hash=9c566b48608b3c7d16b4a181736d01ce}{Oberschelp}{O\bibinitperiod}{Walter}{W\bibinitperiod}{}{}{}{}}%
    }
    \name{author}{1}{}{%
      {{hash=9c566b48608b3c7d16b4a181736d01ce}{Oberschelp}{O\bibinitperiod}{Walter}{W\bibinitperiod}{}{}{}{}}%
    }
    \name{labelname}{2}{}{%
      {{hash=2123a2f55749ed8052801e2407c78ce8}{Pólya}{P\bibinitperiod}{G.}{G\bibinitperiod}{}{}{}{}}%
      {{hash=6730120da32af295d284a7e984fdafb0}{Alexanderson}{A\bibinitperiod}{G.L.}{G\bibinitperiod}{}{}{}{}}%
    }
    \name{author}{2}{}{%
      {{hash=2123a2f55749ed8052801e2407c78ce8}{Pólya}{P\bibinitperiod}{G.}{G\bibinitperiod}{}{}{}{}}%
      {{hash=6730120da32af295d284a7e984fdafb0}{Alexanderson}{A\bibinitperiod}{G.L.}{G\bibinitperiod}{}{}{}{}}%
    }
    \name{labelname}{2}{}{%
      {{hash=d79fb781827a61b1b2ccae135ab391c6}{Ray-Chaudhuri}{R\bibinithyphendelim C\bibinitperiod}{Dijen\bibnamedelima K.}{D\bibinitperiod\bibinitdelim K\bibinitperiod}{}{}{}{}}%
      {{hash=592c4faeab5802bd02ab966810485cec}{Wilson}{W\bibinitperiod}{Richard\bibnamedelima M.}{R\bibinitperiod\bibinitdelim M\bibinitperiod}{}{}{}{}}%
    }
    \name{author}{2}{}{%
      {{hash=d79fb781827a61b1b2ccae135ab391c6}{Ray-Chaudhuri}{R\bibinithyphendelim C\bibinitperiod}{Dijen\bibnamedelima K.}{D\bibinitperiod\bibinitdelim K\bibinitperiod}{}{}{}{}}%
      {{hash=592c4faeab5802bd02ab966810485cec}{Wilson}{W\bibinitperiod}{Richard\bibnamedelima M.}{R\bibinitperiod\bibinitdelim M\bibinitperiod}{}{}{}{}}%
    }
    \name{labelname}{2}{}{%
      {{hash=ad4235730329a3e8b38312618b645bc3}{Ray-Chaudhuri}{R\bibinithyphendelim C\bibinitperiod}{D.K.}{D\bibinitperiod}{}{}{}{}}%
      {{hash=dfdd411311cf3f277a65734d19ad3872}{Singhi}{S\bibinitperiod}{N.M.}{N\bibinitperiod}{}{}{}{}}%
    }
    \name{author}{2}{}{%
      {{hash=ad4235730329a3e8b38312618b645bc3}{Ray-Chaudhuri}{R\bibinithyphendelim C\bibinitperiod}{D.K.}{D\bibinitperiod}{}{}{}{}}%
      {{hash=dfdd411311cf3f277a65734d19ad3872}{Singhi}{S\bibinitperiod}{N.M.}{N\bibinitperiod}{}{}{}{}}%
    }
    \name{labelname}{1}{}{%
      {{hash=e925f31cdb5b5555329abad8314ab802}{Suzuki}{S\bibinitperiod}{Hiroshi}{H\bibinitperiod}{}{}{}{}}%
    }
    \name{author}{1}{}{%
      {{hash=e925f31cdb5b5555329abad8314ab802}{Suzuki}{S\bibinitperiod}{Hiroshi}{H\bibinitperiod}{}{}{}{}}%
    }
    \name{labelname}{1}{}{%
      {{hash=e925f31cdb5b5555329abad8314ab802}{Suzuki}{S\bibinitperiod}{Hiroshi}{H\bibinitperiod}{}{}{}{}}%
    }
    \name{author}{1}{}{%
      {{hash=e925f31cdb5b5555329abad8314ab802}{Suzuki}{S\bibinitperiod}{Hiroshi}{H\bibinitperiod}{}{}{}{}}%
    }
    \name{labelname}{1}{}{%
      {{hash=e925f31cdb5b5555329abad8314ab802}{Suzuki}{S\bibinitperiod}{Hiroshi}{H\bibinitperiod}{}{}{}{}}%
    }
    \name{author}{1}{}{%
      {{hash=e925f31cdb5b5555329abad8314ab802}{Suzuki}{S\bibinitperiod}{Hiroshi}{H\bibinitperiod}{}{}{}{}}%
    }
    \name{labelname}{1}{}{%
      {{hash=e925f31cdb5b5555329abad8314ab802}{Suzuki}{S\bibinitperiod}{Hiroshi}{H\bibinitperiod}{}{}{}{}}%
    }
    \name{author}{1}{}{%
      {{hash=e925f31cdb5b5555329abad8314ab802}{Suzuki}{S\bibinitperiod}{Hiroshi}{H\bibinitperiod}{}{}{}{}}%
    }
    \name{labelname}{1}{}{%
      {{hash=e925f31cdb5b5555329abad8314ab802}{Suzuki}{S\bibinitperiod}{Hiroshi}{H\bibinitperiod}{}{}{}{}}%
    }
    \name{author}{1}{}{%
      {{hash=e925f31cdb5b5555329abad8314ab802}{Suzuki}{S\bibinitperiod}{Hiroshi}{H\bibinitperiod}{}{}{}{}}%
    }
    \name{labelname}{1}{}{%
      {{hash=464f4e389b93f6c64353eed7bbc9b5b7}{Thomas}{T\bibinitperiod}{Simon}{S\bibinitperiod}{}{}{}{}}%
    }
    \name{author}{1}{}{%
      {{hash=464f4e389b93f6c64353eed7bbc9b5b7}{Thomas}{T\bibinitperiod}{Simon}{S\bibinitperiod}{}{}{}{}}%
    }
    \name{labelname}{3}{}{%
      {{hash=d396fd0a513b7d7b94dd99a3cc3f428a}{Trung}{T\bibinitperiod}{Tran\bibnamedelima Van}{T\bibinitperiod\bibinitdelim V\bibinitperiod}{}{}{}{}}%
      {{hash=920662b70d8f5e732845cec1ff6558bb}{Wu}{W\bibinitperiod}{Qui-rong}{Q\bibinithyphendelim r\bibinitperiod}{}{}{}{}}%
      {{hash=19791916815ca981815729724b65d4c3}{Mesner}{M\bibinitperiod}{Dale\bibnamedelima M.}{D\bibinitperiod\bibinitdelim M\bibinitperiod}{}{}{}{}}%
    }
    \name{author}{3}{}{%
      {{hash=d396fd0a513b7d7b94dd99a3cc3f428a}{Trung}{T\bibinitperiod}{Tran\bibnamedelima Van}{T\bibinitperiod\bibinitdelim V\bibinitperiod}{}{}{}{}}%
      {{hash=920662b70d8f5e732845cec1ff6558bb}{Wu}{W\bibinitperiod}{Qui-rong}{Q\bibinithyphendelim r\bibinitperiod}{}{}{}{}}%
      {{hash=19791916815ca981815729724b65d4c3}{Mesner}{M\bibinitperiod}{Dale\bibnamedelima M.}{D\bibinitperiod\bibinitdelim M\bibinitperiod}{}{}{}{}}%
    }
    \name{labelname}{1}{}{%
      {{hash=c706278e1da42099c408b161e5bc2cdb}{Vroedt}{V\bibinitperiod}{Cornelis}{C\bibinitperiod}{de}{d\bibinitperiod}{}{}}%
    }
    \name{author}{1}{}{%
      {{hash=c706278e1da42099c408b161e5bc2cdb}{Vroedt}{V\bibinitperiod}{Cornelis}{C\bibinitperiod}{de}{d\bibinitperiod}{}{}}%
    }
    \name{labelname}{1}{}{%
      {{hash=e9633b1f2eff279f739c89fd7a042f5e}{Ward}{W\bibinitperiod}{Morgan}{M\bibinitperiod}{}{}{}{}}%
    }
    \name{author}{1}{}{%
      {{hash=e9633b1f2eff279f739c89fd7a042f5e}{Ward}{W\bibinitperiod}{Morgan}{M\bibinitperiod}{}{}{}{}}%
    }
\makeatletter\@removefromreset{footnote}{chapter}\makeatother
\newcommand{\middlerel}[1]{\mathrel{}\middle#1\mathrel{}} 
\newcommand{\F}{\mathbb{F}}
\newcommand{\N}{\mathbb{N}}
\newcommand{\Z}{\mathbb{Z}}
\newcommand{\gauss}[3]{\genfrac{[}{]}{0pt}{}{#1}{#2}_{#3}}
\newcommand{\gaussm}[2]{\genfrac{[}{]}{0pt}{}{#1}{#2}}
\DeclareMathOperator{\PGammaL}{P\Gamma L}
\DeclareMathOperator{\Aut}{Aut}
\DeclareMathOperator{\PG}{PG}
\def\theorem@checkbold{} 
\newtheorem{theorem}{Theorem}[section]
\newtheorem{lemma}[theorem]{Lemma}
\newtheorem{fact}[theorem]{Fact}
\theoremstyle{definition}
\newtheorem{definition}[theorem]{Definition}
\theoremstyle{remark}
\newtheorem{remark}[theorem]{Remark}
\author{
Michael Kiermaier
\thanks{
University of Bayreuth, Institute for Mathematics, 95440 Bayreuth, Germany
\newline
email:~\texttt{michael.kiermaier@uni-bayreuth.de}
\newline
homepage:~\url{http://www.mathe2.uni-bayreuth.de/michaelk/}
\newline
Research supported by ESF COST Action IC1104.
}\\
\emph{University of Bayreuth}
\and
Mario Osvin Pavčević
\thanks{
University of Zagreb, Faculty of Electrical Engineering and Computing, Department of Applied Mathematics, Unska 3, 10000 Zagreb, Croatia
\newline
email:~\texttt{mario.pavcevic@fer.hr}
\newline
Research supported by ESF COST Action IC1104.
}\\
\emph{University of Zagreb}
}
\title{Intersection numbers\\for subspace designs}
\begin{document}
\maketitle
\begin{abstract}
Intersection numbers for subspace designs are introduced and $q$-analogs of the Mendelsohn and Köhler equations are given.
As an application, we are able to determine the intersection structure of a putative $q$-analog of the Fano plane for any prime power $q$.
It is shown that its existence implies the existence of a $2$-$(7,3,q^4)_q$ subspace design.
Furthermore, several simplified or alternative proofs concerning intersection numbers of ordinary block designs are discussed.
\end{abstract}

\section{Introduction and preliminaries}
\subsection{History}
The earliest reference for $q$-analogs of block designs (subspace designs) is \cite{Cameron-1974}.
However, the idea is older, since it is stated that ``Several people have observed that the concept of a $t$-design can be generalised [...]''.
They have also been mentioned in a more general context in \cite{Delsarte-1976-JCTSA20[2]:230-243}.
An introduction can be found in \cite[Day~4]{Suzuki-1989-Designs}.

For $q = 2$, the first nontrivial subspace design%
with $t=2$ has been constructed in \cite{Thomas-1987-GeomDed24[2]:237-242} and generalized to arbitrary $q$ in \cite{Suzuki-1990-GrCo6[3]:293-296,Suzuki-1992-GrCo8[4]:381-389}.
The first nontrivial subspace design with $t = 3$ is found in \cite{Braun-Kerber-Laue-2005-DCC34[1]:55-70}.

In \cite[Th.~1.2]{RayChaudhuri-Singhi-1989-LAA114_115:57-68} it has been shown that for fixed parameters $t$, $v$ and $k$ and $\lambda$ sufficiently large, each admissible parameter set $t$-$(v,k,\lambda)$ is realizable as a subspace design with possibly repeated blocks.
In \cite{Fazeli-Lovett-Vardy-2013-arXiv} it has been proven that nontrivial simple subspace designs exist for any value of $t$.

Quite recently \cite{Braun-Etzion-Ostergard-Vardy-Wassermann-2013-arXiv}, a $2$-analog of the Steiner triple system $\operatorname{STS}(13)$ has been found computationally, by applying the Kramer-Mesner method described in \cite{Braun-Kerber-Laue-2005-DCC34[1]:55-70}.
This discovery is a significant breakthrough, since it is
the very first nontrivial $q$-Steiner system with $t > 1$ and refutes the earlier
conjecture that no such $q$-Steiner system exists.

\subsection{Gaussian binomial coefficients}
We define the $q$-analog of a non-negative integer $n$ as
\[
    [n] = [n]_q = \frac{q^n - 1}{q-1}\in\Z[q]
\]
and the $q$-factorial of $n$ as
\[
    [n]! = [n]_q! = \prod_{i=1}^n [i]\in\Z[q]\text{.}
\]
The notion \emph{$q$-analog} stems from the fact that the evaluation for $q = 1$ gives $[n]_1 = n$ and $[n]_1! = n!$.
Using this notation, for $k\in\Z$ and $n\in\N$ the \emph{Gaussian binomial coefficient} is the $\Z[q]$-polynomial
\[
\gaussm{n}{k} =
\gauss{n}{k}{q} =
\begin{cases}
\frac{[n]!}{[k]!\cdot [n-k]!} & \text{if }k\in\{0,\ldots,n\}\text{,} \\
0 & \text{otherwise.}
\end{cases}
\]
Its evaluation for $q = 1$ gives the binomial coefficient $\binom{n}{k}$.
For that reason, the Gaussian binomial coefficient is known as $q$-analog of the binomial coefficient.
Many identities for binomial coefficients have $q$-analogs for the Gaussian binomial coefficients.
As an example, we mention
\[
	\gaussm{n}{k} = \gaussm{n}{n - k}
	\qquad\text{and}\qquad
	\gaussm{n}{h}\gaussm{n-h}{k} = \gaussm{n}{k}\gaussm{n-k}{h}\text{,}
\]
for $n\geq 1$ the $q$-Pascal triangle identities
\begin{equation*}
	\gauss{n}{k}{q} = \gauss{n-1}{k-1}{q} + q^k\gauss{n-1}{k}{q} = q^{n-k}\gauss{n-1}{k-1}{q} + \gauss{n-1}{k}{q}
\end{equation*}
and the identity (see for example \cite[Eq.~(3)]{Delsarte-1976-JCTSA20[2]:230-243})
\begin{equation}
	\label{eq:formula_inverse_pascal_matrix}
	\sum_{i=a}^b (-1)^{i-a} q^{\binom{i-a}{2}} \gauss{b}{i}{q} \gauss{i}{a}{q}
	= \begin{cases}
		1 & \text{if }a=b\text{,} \\
		0 & \text{else.}
	\end{cases}
\end{equation}
Another one is provided in the following
\begin{lemma}
	\label{lma:alternating_gauss_sum}
	Let $n,k\in\Z$ with $n\geq 1$.
	Then
\begin{equation*}
	\sum_{i=0}^k (-1)^i q^{\binom{i}{2}} \gauss{n}{i}{q} = (-1)^k q^{\binom{k+1}{2}} \gauss{n-1}{k}{q}\text{.}
\end{equation*}
\end{lemma}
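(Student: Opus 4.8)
The plan is to prove the identity by induction on $k$, using the first of the two $q$-Pascal triangle identities displayed above, namely $\gauss{n}{k}{q} = \gauss{n-1}{k-1}{q} + q^k\gauss{n-1}{k}{q}$. Write $S_k$ for the left-hand sum. First I would dispose of the degenerate range: if $k<0$ the sum is empty, hence $0$, while the right-hand side also vanishes because $\gauss{n-1}{k}{q}=0$ for $k<0$; so the claim is trivial and we may assume $k\geq 0$. The base case $k=0$ is immediate, since both sides equal $1$ (using $\binom{0}{2}=\binom{1}{2}=0$ together with $\gauss{n}{0}{q}=\gauss{n-1}{0}{q}=1$).

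For the inductive step I would assume the claim for $k-1$ and split off the top term to get $S_k = S_{k-1} + (-1)^k q^{\binom{k}{2}}\gauss{n}{k}{q}$. Substituting the induction hypothesis in the form $S_{k-1} = (-1)^{k-1}q^{\binom{k}{2}}\gauss{n-1}{k-1}{q}$ (note that the right-hand side of the lemma at index $k-1$ carries the exponent $\binom{k}{2}$, matching the top term) allows me to factor out $(-1)^k q^{\binom{k}{2}}$ and obtain $S_k = (-1)^k q^{\binom{k}{2}}\bigl(\gauss{n}{k}{q} - \gauss{n-1}{k-1}{q}\bigr)$.

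The $q$-Pascal identity then rewrites the bracket as exactly $q^k\gauss{n-1}{k}{q}$, giving $S_k = (-1)^k q^{\binom{k}{2}+k}\gauss{n-1}{k}{q}$. All that remains is the exponent bookkeeping $\binom{k}{2}+k = \binom{k+1}{2}$, which is the elementary identity $\tfrac{k(k-1)}{2}+k=\tfrac{k(k+1)}{2}$, and the induction closes. (One could equivalently phrase this as a telescoping argument: the difference of consecutive right-hand sides reduces, after dividing by $(-1)^k q^{\binom{k}{2}}$, to precisely the $q$-Pascal identity.)

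I expect no serious obstacle; the content of the lemma is a single application of $q$-Pascal, and the only points requiring care are the sign and exponent bookkeeping and the boundary regime $k\geq n$, where several Gaussian binomial coefficients vanish. In that regime no separate case analysis is needed, because the $q$-Pascal identity remains valid under the convention that out-of-range coefficients are $0$ (for instance at $k=n$ one has $\gauss{n-1}{n}{q}=0$), so the induction drives both sides consistently to $0$.
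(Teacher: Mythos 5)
Your proof is correct and rests on exactly the same ingredient as the paper's: the $q$-Pascal identity $\gauss{n}{k}{q} = \gauss{n-1}{k-1}{q} + q^k\gauss{n-1}{k}{q}$ applied with careful sign and exponent bookkeeping. The only difference is presentational -- the paper applies $q$-Pascal to every summand at once and telescopes the two resulting sums, whereas you package the same cancellation as an induction on $k$ (an equivalence you yourself note), so the two arguments are essentially identical.
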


\begin{proof}
    By the $q$-Pascal triangle identity,
    \begin{align*}
        \sum_{i = 0}^k (-1)^i q^{\binom{i}{2}}\gauss{n}{i}{q}
	& = \sum_{i = 0}^k (-1)^i q^{\binom{i}{2}} \cdot q^i \gauss{n-1}{i}{q} + \sum_{i = 0}^k (-1)^i q^{\binom{i}{2}} \cdot \gauss{n-1}{i-1}{q} \\
	& = \sum_{i = 0}^k (-1)^i q^{\binom{i+1}{2}} \gauss{n-1}{i}{q} - \sum_{i=-1}^{k-1} (-1)^i q^{\binom{i+1}{2}} \gauss{n-1}{i}{q} \\
	& = (-1)^k q^{\binom{k+1}{2}}\gauss{n-1}{k}{q}\text{.}
    \end{align*}
\end{proof}

\begin{remark}
According to \cite{Fray-1967-DukeMJ34[3]:467-480}, $q$-analogs of non-negative integers were introduced in \cite{Jackson-1910-AJM32[3]:305-314} and their binomial coefficients in \cite{Ward-1936-AJM58[2]:255-266}.
For a deeper discussion of the Gaussian binomial coefficients, see \cite{Polya-Alexanderson-1971-ElemMath26:102-109,Fray-1967-DukeMJ34[3]:467-480,Goldman-Rota-1970-StudApplM49[3]:239-258,Cohn-2004-AMM111[6]:487-495}.
\end{remark}

\subsection{$q$-analogs of combinatorial structures}
The set of $k$-element subsets (\emph{$k$-subsets}) of a set $V$ will be denoted by $\binom{V}{k}$ and the set of all $k$-dimensional subspaces (\emph{$k$-subspaces}) of an $\F_q$-vector space will be denoted by $\gaussm{V}{k}$ or $\gauss{V}{k}{q}$.
The latter is known as the \emph{Graßmannian}.
This notation is chosen for the fact that the size of $\binom{V}{k}$ equals the binomial coefficient $\binom{\#V}{k}$, and the size of $\gauss{V}{k}{q}$ equals the Gaussian binomial coefficient $\gauss{\dim(V)}{k}{q}$.
There are good reasons to interpret the subspace lattice $\mathcal{L}(V)$ of a $v$-dimensional vector space $V$ as the $q$-analog of the subset lattice of a $v$-element set $V$, which corresponds to $q = 1$ \cite{Cohn-2004-AMM111[6]:487-495}.

Many combinatorial areas, like design theory and coding theory, are based on the subset lattice of a $v$-element set $V$.
Replacing the set-theoretic notions by their vector space counterparts gives rise to the study of their $q$-analogs, which are based on the subspace lattice of a $v$-dimensional vector space $V$.
An important part of these theories is the investigation of results in the set-theoretic case for their applicability in the $q$-analog case.
For example, in \cite{Kiermaier-Laue-2014-arXiv} $q$-analogs of derived and residual designs are studied.
In this article, we will give a $q$-analog of the theory of intersection numbers of designs.

In the following, $q$-analogs of several well-known definitions and statements on ordinary block designs are given (Def.~\ref{def:design}, Fact~\ref{fct:integrality}, Fact~\ref{fct:lambda_double}, Fact~\ref{fct:dual}, Def.~\ref{def:intersection_number}, Th.~\ref{thm:mendelson_equations}, Th.~\ref{thm:q_koehler}).
This means that one gets back the original definition or statement if $q$ is set to $1$ and all vector space notions are replaced by their set-theoretic counterparts.%

\subsection{Subspace designs}
\begin{definition}
\label{def:design}
Let $q$ be a prime power, $V$ an $\F_q$-vector space of finite dimension $v$ and $t,k,\lambda$ be non-negative integers.
A set $D$ of $k$-subspaces (\emph{blocks}) of $V$ is called a \emph{$t$-$(v,k,\lambda)_q$ (subspace) design} if each $t$-subspace of $V$ is contained in exactly $\lambda$ blocks of $D$.
\end{definition}

By the above discussion, an ordinary block design can be seen as the case $q = 1$ of a subspace design.
For all $t\in\{0,\ldots,k\}$, the full Graßmannian $\gaussm{V}{k}$ forms the \emph{trivial} $t$-$(v,k,\gauss{v-t}{k-t}{q})_q$ subspace design.
It is clear that for any $t$-$(v,k,\lambda)_q$ subspace design $D$, the \emph{complementary} design $\gaussm{V}{k}\setminus D$ is a $t$-$(v,k,\gauss{v-t}{k-t}{q} - \lambda)_q$ subspace design.

Of particular interest is the case $\lambda = 1$, where $D$ is called a \emph{Steiner system}.
For $t = 1$, a $1$-$(v,k,1)_q$ Steiner system is the same as a spread of $(k-1)$-flats in the projective geometry $\PG(v-1,q)$, which exists if and only if $k$ divides $v$.
The only known nontrivial $q$-analog of a Steiner system with $t \geq 2$ has the parameters $2$-$(13,3,1)_2$ \cite{Braun-Etzion-Ostergard-Vardy-Wassermann-2013-arXiv}.

By the fundamental theorem of projective geometry, the automorphism group of the lattice $\mathcal{L}(V)$ is given by the projective semilinear group $\PGammaL(V)$ with its natural action on $\mathcal{L}(V)$.
The automorphism group $\Aut(D)$ of a subspace design $D$ is defined as the stabilizer of $D$ under the induced action of $\PGammaL(V)$ on the power set of $\mathcal{L}(V)$.

The following fact is the $q$-analog of a well-known property of block designs:
\begin{fact}[{{\cite[Lemma~4.1(1)]{Suzuki-1990-EuJC11[6]:601-607}}}]
	\label{fct:integrality}
	Let $D$ be $t$-$(v,k,\lambda)_q$ subspace design.
	For each $i \in\{0,\ldots,t\}$, $D$ is an $i$-$(v,k,\lambda_i)_q$ subspace design with
	\[
		\lambda_i
		= \frac{\gauss{v-i}{t-i}{q}}{\gauss{k-i}{t-i}{q}}\cdot\lambda
		= \frac{\gauss{v-i}{k-i}{q}}{\gauss{v-t}{k-t}{q}}\cdot\lambda\text{.}
	\]
	In particular, the number of blocks is given by $\#D = \lambda_0$.
\end{fact}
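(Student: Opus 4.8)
The plan is to prove this by a double-counting argument over flags, mirroring the classical case $q=1$. Fix an $i$-subspace $I$ of $V$ and let $\lambda_i(I)$ denote the number of blocks $B\in D$ with $I\subseteq B$; the goal is to show that $\lambda_i(I)$ is independent of the choice of $I$ and equals the asserted value. To this end I would count the set of flags
\[
	\{(T,B) : I\subseteq T\subseteq B,\ T\in\gaussm{V}{t},\ B\in D\}
\]
in two different ways.

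Summing first over $T$: every $t$-subspace $T$ with $I\subseteq T$ is contained in exactly $\lambda$ blocks by the defining property of $D$, and the number of such $T$ equals the number of $(t-i)$-subspaces of the quotient space $V/I$, namely $\gauss{v-i}{t-i}{q}$. Hence the flag count equals $\gauss{v-i}{t-i}{q}\lambda$. Summing first over $B$: only the blocks containing $I$ contribute, and for each of them the $t$-subspaces $T$ with $I\subseteq T\subseteq B$ correspond to the $(t-i)$-subspaces of $B/I$, of which there are $\gauss{k-i}{t-i}{q}$. Thus the flag count also equals $\lambda_i(I)\gauss{k-i}{t-i}{q}$. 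Equating the two expressions gives
\[
	\lambda_i(I) = \frac{\gauss{v-i}{t-i}{q}}{\gauss{k-i}{t-i}{q}}\lambda\text{,}
\]
which is manifestly independent of $I$, so $D$ is an $i$-$(v,k,\lambda_i)_q$ design with this value of $\lambda_i$; specializing to $i=0$ (the zero subspace lies in every block) yields $\#D=\lambda_0$.

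The enabling geometric fact, and the only place where anything beyond bookkeeping occurs, is that the interval $[I,V]$ in the subspace lattice is isomorphic to the full subspace lattice of $V/I$ (and likewise $[I,B]$ to that of $B/I$), so that the number of intermediate $t$-subspaces depends only on the dimensions $v-i,t-i$ (respectively $k-i,t-i$) and not on $I$ itself. This quotient-space correspondence is the $q$-analog of choosing the remaining $t-i$ elements of a $t$-set outside a fixed $i$-set, and it is exactly what makes the count uniform in $I$. The remaining task is purely formal: to verify the second displayed formula for $\lambda_i$ by rewriting the Gaussian binomial coefficients as $q$-factorial quotients, where one checks directly that both sides equal $\frac{[v-i]!\,[k-t]!}{[v-t]!\,[k-i]!}$ (equivalently, one may invoke the identity $\gaussm{n}{h}\gaussm{n-h}{k}=\gaussm{n}{k}\gaussm{n-k}{h}$ stated earlier). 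I expect no genuine obstacle here; the proof is routine once the flag-counting is set up, and the only care required is to keep the dimension ranges $0\leq i\leq t\leq k\leq v$ so that all occurring Gaussian binomial coefficients are the intended nonzero ones.
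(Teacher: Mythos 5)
Your argument is correct. Note that the paper does not prove this statement at all --- it is quoted as a Fact with a citation to Suzuki's Lemma~4.1(1) --- so there is no in-paper proof to compare against; your double count of flags $(T,B)$ with $I\subseteq T\subseteq B$ is the standard $q$-analog of the classical argument, it establishes both the independence of $\lambda_i(I)$ from $I$ and the first formula, and the second formula reduces to the $q$-factorial identity (or the subset-of-a-subset identity $\gaussm{n}{h}\gaussm{n-h}{k}=\gaussm{n}{k}\gaussm{n-k}{h}$) exactly as you say.
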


As a result, the existence of a $t$-$(v,k,\lambda)_q$ design implies the \emph{integrality conditions} $\lambda_i\in\Z$ for all $i\in\{0,\ldots,t\}$.
Without requiring the actual existence of a corresponding design, any parameter set $t$-$(v,k,\lambda)_q$ fulfilling the integrality conditions will be called \emph{admissible}.

The following fact describes a refinement of the numbers $\lambda_i$.
\begin{fact}[{{\cite[Lemma~2.1]{Suzuki-1990-EuJC11[6]:601-607}, \cite[Lemma~4.2]{Suzuki-1989-Designs}}}]
	\label{fct:lambda_double}
	Let $D$ be a $t$-$(v,k,\lambda)_q$ subspace design and $i,j$ non-negative integers with $i + j \leq t$.
	Let $I \in \gaussm{V}{i}$ and $J\in\gaussm{V}{j}$ with $I\cap J = \{\mathbf{0}\}$.
	The number
	\[
	    \lambda_{i,j} = \{ B\in D \mid I\leq B\text{ and }B\cap J = \{\mathbf{0}\}\}
	\]
	is independent of the choice of $I$ and $J$.
	They are determined by the recurrence relation
	\[
		\lambda_{i,0} = \lambda_i
		\qquad\text{and}\qquad
		\lambda_{i,j+1} = \lambda_{i,j} - q^j\lambda_{i+1,j}\text{.}
	\]
	In closed form,
	\[
		\lambda_{i,j} = q^{j(k-i)} \frac{\gauss{v-i-j}{k-i}{q}}{\gauss{v-t}{k-t}{q}}\cdot\lambda\text{.}
	\]
\end{fact}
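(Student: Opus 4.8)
The plan is to establish all three assertions (independence of the choice of $I$ and $J$, the recurrence, and the closed form) simultaneously by induction on $j$, using a single combinatorial partition to generate the recurrence, which in turn delivers both the independence statement and the input needed for the closed-form computation.

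I would begin with the base case $j=0$. Here $J=\{\mathbf{0}\}$, so the condition $B\cap J=\{\mathbf{0}\}$ is vacuous and $\lambda_{i,0}$ just counts the blocks of $D$ containing the $i$-subspace $I$. By Fact~\ref{fct:integrality} this equals $\lambda_i$ independently of $I$, matching the claimed closed form since $q^{0}\gauss{v-i}{k-i}{q}/\gauss{v-t}{k-t}{q}\cdot\lambda=\lambda_i$. For the inductive step, fix $I$ of dimension $i$ and $J$ of dimension $j+1$ with $I\cap J=\{\mathbf{0}\}$ (so $i+j+1\le t$), choose any hyperplane $J_0<J$ (a $j$-subspace, automatically satisfying $I\cap J_0=\{\mathbf{0}\}$), and partition the set of blocks $B\in D$ with $I\le B$ and $B\cap J_0=\{\mathbf{0}\}$ according to whether $B\cap J=\{\mathbf{0}\}$ or not. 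The blocks with $B\cap J=\{\mathbf{0}\}$ are exactly those counted by $\lambda_{i,j+1}$ for $(I,J)$. For a block with $B\cap J\ne\{\mathbf{0}\}$, since $B\cap J$ meets the hyperplane $J_0$ trivially it has dimension at most $1$, hence is a single point $P\le J$ with $P\not\le J_0$; there are $[j+1]_q-[j]_q=q^j$ such points. For each fixed $P$ one checks $(I+P)\cap J_0=\{\mathbf{0}\}$ (using $I\cap J=\{\mathbf{0}\}$ together with $P\not\le J_0$), so the blocks with $I+P\le B$ and $B\cap J_0=\{\mathbf{0}\}$ are precisely those counted by $\lambda_{i+1,j}$ for $(I+P,J_0)$, and each meets $J$ exactly in $P$; this makes the correspondence a genuine partition with no block double-counted. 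Counting both parts yields
\[
\lambda_{i,j} = \lambda_{i,j+1} + q^j\,\lambda_{i+1,j}\text{.}
\]
By the induction hypothesis the quantities $\lambda_{i,j}$ and $\lambda_{i+1,j}$ on the right are already known to be independent of all choices, so $\lambda_{i,j+1}$ is forced to equal $\lambda_{i,j}-q^j\lambda_{i+1,j}$, establishing at once the independence of $\lambda_{i,j+1}$ from $I$ and $J$ and the stated recurrence.

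The closed form then follows by feeding this recurrence into the same induction. Assuming the formula for $\lambda_{i,j}$ and $\lambda_{i+1,j}$, and using $q^j\cdot q^{j(k-i-1)}=q^{j(k-i)}$, one computes
\[
\lambda_{i,j+1} = \lambda_{i,j} - q^j\lambda_{i+1,j} = \frac{q^{j(k-i)}\lambda}{\gauss{v-t}{k-t}{q}}\Bigl(\gauss{v-i-j}{k-i}{q} - \gauss{v-i-j-1}{k-i-1}{q}\Bigr)\text{.}
\]
The $q$-Pascal triangle identity gives $\gauss{v-i-j}{k-i}{q}-\gauss{v-i-j-1}{k-i-1}{q}=q^{k-i}\gauss{v-i-j-1}{k-i}{q}$, and since $q^{j(k-i)}q^{k-i}=q^{(j+1)(k-i)}$ this is exactly the claimed value for $\lambda_{i,j+1}$, completing the induction.

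The main obstacle is the geometric bookkeeping in the recurrence step: verifying that a block disjoint from the hyperplane $J_0$ can meet $J$ in at most a point, that $I+P$ stays disjoint from $J_0$, and that the $q^j$ points outside $J_0$ organize the ``bad'' blocks into a true partition. Once this is settled, the base case and the closed-form computation are routine, the latter being a direct application of the $q$-Pascal identity.
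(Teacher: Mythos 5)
Your proof is correct. Note, however, that the paper itself gives no proof of this statement: it is recorded as a Fact with citations to Suzuki's work, so there is no in-paper argument to compare against. Your argument is the natural (and presumably the cited) one, since the recurrence $\lambda_{i,j+1} = \lambda_{i,j} - q^j\lambda_{i+1,j}$ stated in the Fact is exactly the partition identity you derive: among the $\lambda_{i,j}$ blocks containing $I$ and meeting a hyperplane $J_0$ of $J$ trivially, those meeting $J$ nontrivially meet it in exactly one of the $q^j$ points of $J$ outside $J_0$, and for each such point $P$ they are counted by $\lambda_{i+1,j}$ via the pair $(I+P, J_0)$. All the geometric checks you flag go through: $(B\cap J)\cap J_0=\{\mathbf{0}\}$ forces $\dim(B\cap J)\leq 1$ by the dimension formula; $P\cap I=\{\mathbf{0}\}$ because $P\leq J$ and $I\cap J=\{\mathbf{0}\}$, so $\dim(I+P)=i+1$; and $(I+P)\cap J_0=\{\mathbf{0}\}$ follows since any $a+b$ in it (with $a\in I$, $b\in P$) has $a\in I\cap(J_0+P)=I\cap J=\{\mathbf{0}\}$. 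The dimension constraint $(i+1)+j\leq t$ needed to invoke the induction hypothesis is supplied by $i+(j+1)\leq t$, and the closed form drops out of the $q$-Pascal identity exactly as you compute. This is a complete and self-contained proof of a statement the paper only quotes.
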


\begin{remark}
\begin{enumerate}[(i)]
\item Fact~\ref{fct:lambda_double} can also be found in \cite[Lemma~2.2 and Lemma~2.6]{Suzuki-1990-HokkMS19[3]:403-415}.
However, the exponents of $q$ given there are not correct.
\item For ordinary block designs, the numbers $\lambda_{i,j}$ have been introduced in~\cite{RayChaudhuri-Wilson-1975-OsakaJM12[3]:737-744}.
\end{enumerate}
\end{remark}

Fixing some non-singular bilinear form $\beta$ on $V$, the \emph{dual subspace} of a subspace $W\in\mathcal{L}(V)$ is defined as
\[
    W^\perp = \{x\in V \mid \beta(x,y) = 0 \text{ for all }y\in W\}\text{.}
\]
Now for a $t$-$(v,k,\lambda)_q$ subspace design $D$, its \emph{dual} subspace design is defined as
\[
    D^\perp = \{B^\perp \mid B\in \mathcal{B}\}\text{.}
\]
Up to equivalence, this definition does not depend on the choice of $\beta$.
The dual subspace design is the $q$-analog of the supplementary block design.

\begin{fact}[{{\cite[Lemma~4.2]{Suzuki-1990-EuJC11[6]:601-607}}}]
\label{fct:dual}
Let $D$ be a $t$-$(v,k,\lambda)_q$ subspace design.
Then $D^\perp$ is a subspace design with the parameters
\[
	t\text{-}\left(v,v-k,\frac{\gauss{v-t}{k}{q}}{\gauss{v-t}{k-t}{q}}\cdot\lambda\right)_{\!\!q}\text{.}
\]
\end{fact}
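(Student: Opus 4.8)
The plan is to reduce the assertion about $D^\perp$ to a counting problem inside the original design $D$ and then to solve that problem with a small linear system. Since $\perp$ is an inclusion-reversing involution on $\mathcal{L}(V)$ with $\dim W^\perp = v-\dim W$, the set $D^\perp$ consists of $(v-k)$-subspaces, and $\#D^\perp = \#D$. For a fixed $t$-subspace $T$ of $V$ one has $T\le B^\perp \iff B\le T^\perp$, so the number of blocks of $D^\perp$ containing $T$ equals $\omega(U):=\#\{B\in D : B\le U\}$, where $U:=T^\perp$ has dimension $v-t$. As $T$ ranges over all $t$-subspaces, $U$ ranges over all $(v-t)$-subspaces; hence it suffices to show that $\omega(U)$ is the same for every $(v-t)$-subspace $U$ and to compute its value.

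The key observation is the dimension bound $\dim(B\cap U)\ge \dim B+\dim U-v = k-t$, valid for every block $B$, together with $\dim(B\cap U)\le k$ and the equivalence $\dim(B\cap U)=k \iff B\le U$. Thus, writing $n_d:=\#\{B\in D : \dim(B\cap U)=d\}$, only the $t+1$ values $d\in\{k-t,\ldots,k\}$ occur, and $\omega(U)=n_k$. First I would set up, for each $i\in\{0,\ldots,t\}$, the double count of pairs $(W,B)$ with $W\in\gauss{U}{i}{q}$, $B\in D$ and $W\le B$: counting by $W$ and using that $D$ is also an $i$-design (Fact~\ref{fct:integrality}) gives $\lambda_i\gauss{v-t}{i}{q}$, while counting by $B$ gives $\sum_d\gauss{d}{i}{q}\,n_d$. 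This produces the linear system
\[
	\sum_{d=k-t}^{k}\gauss{d}{i}{q}\,n_d = \lambda_i\gauss{v-t}{i}{q}\qquad(i=0,\ldots,t)\text{.}
\]

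The main point is that this $(t+1)\times(t+1)$ system has an invertible coefficient matrix $\bigl(\gauss{d}{i}{q}\bigr)$: each $d\mapsto\gauss{d}{i}{q}$ is a polynomial of degree $i$ in the variable $q^d$, and these are evaluated at the distinct points $q^{k-t},\ldots,q^k$, so the matrix is a generalized Vandermonde matrix, whose inverse is furnished explicitly by the orthogonality relation~\eqref{eq:formula_inverse_pascal_matrix}. Consequently $n_k$ is a fixed linear combination of the right-hand sides and is independent of $U$, which already proves that $D^\perp$ is a $t$-design. To pin down the value I would avoid inverting the system explicitly and instead exploit linearity: by Fact~\ref{fct:integrality} each right-hand side is proportional to $\lambda$, so $n_k=c\,\lambda$ for a constant $c$ depending only on $v,k,t,q$. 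Evaluating both sides for the trivial design $\gaussm{V}{k}$, where $\lambda=\gauss{v-t}{k-t}{q}$ and $\omega(U)=\gauss{v-t}{k}{q}$ simply counts the $k$-subspaces of $U$, forces $c=\gauss{v-t}{k}{q}/\gauss{v-t}{k-t}{q}$. Hence $\omega(U)=\frac{\gauss{v-t}{k}{q}}{\gauss{v-t}{k-t}{q}}\,\lambda$ for every $U$, which is exactly the claimed replication number of the $t$-$(v,v-k,\cdot)_q$ design $D^\perp$.

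I expect the main obstacle to be the step guaranteeing that $n_k$ is determined, i.e.\ the invertibility of the $q$-binomial system: the dimension bound $\dim(B\cap U)\ge k-t$ is precisely what makes the number of unknowns match the number of available equations $i\le t$, and without it the local design data would not suffice to isolate $\omega(U)$. The degenerate case $k>v-t$ is handled automatically, since then no block fits inside $U$, so $\omega(U)=0$, while also $\gauss{v-t}{k}{q}=0$.
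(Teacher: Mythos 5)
The paper does not prove this statement at all --- it is imported as a ``Fact'' with a citation to Suzuki's Lemma~4.2 --- so there is no in-paper proof to compare against; what matters is whether your argument stands on its own, and it does. The reduction of ``$T\leq B^\perp$'' to ``$B\leq U$'' with $U=T^\perp$ of dimension $v-t$ is correct, the double count giving $\sum_{d=k-t}^{k}\gauss{d}{i}{q}\,n_d=\gauss{v-t}{i}{q}\lambda_i$ is exactly the Mendelsohn system of Theorem~\ref{thm:mendelson_equations} specialized to $S=U$ (derived here from scratch using only Fact~\ref{fct:integrality}, which avoids any circularity with Lemma~\ref{lma:alpha_unique}, whose proof in the paper relies on Fact~\ref{fct:dual}), the dimension bound $\dim(B\cap U)\geq k-t$ correctly cuts the unknowns down to $t+1$, and the normalization of the constant $c$ by evaluating on the trivial design is a clean way to avoid computing the inverse. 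One inaccuracy: equation~\eqref{eq:formula_inverse_pascal_matrix} does \emph{not} furnish the inverse of your coefficient matrix $\bigl(\gauss{d}{i}{q}\bigr)_{i\in\{0,\dots,t\},\,d\in\{k-t,\dots,k\}}$; that identity inverts the triangular Pascal matrix indexed by $\{0,\dots,t\}\times\{0,\dots,t\}$, whereas your matrix has shifted column indices and is generally not triangular. This does not damage the proof, because you only need invertibility, which your generalized Vandermonde argument (each $\gauss{d}{i}{q}$ is a degree-$i$ polynomial in $q^d$ with nonzero leading coefficient, evaluated at the distinct points $q^{k-t},\dots,q^{k}$) already establishes --- but you should delete the appeal to~\eqref{eq:formula_inverse_pascal_matrix} or replace it by that Vandermonde factorization.
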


\section{Intersection numbers}
\subsection{Plain intersection numbers}
\begin{definition}
\label{def:intersection_number}
Let $D$ be a $t$-$(v,k,\lambda)_q$ subspace design.
For any subspace $S$ of $V$ and $i\in\{0,\ldots,k\}$, we define the $i$-th intersection number of $S$ in $D$ as
\[
	\alpha_i(S) = \#\left\{B\in D \mid \dim(B\cap S) = i\right\}\text{.}
\]
If the set $S$ is clear from the context, we use the abbreviation $\alpha_i = \alpha_i(S)$.
Furthermore, the $(k+1)$-tuple $\alpha(S) = (\alpha_0(S),\alpha_1(S),\ldots,\alpha_k(S))$ will be called the \emph{intersection vector} of $S$ in $D$.
\end{definition}

The intersection numbers are a $q$-analog of the intersection numbers defined in \cite{Mendelsohn-1971} for blocks $S$ and independently as ``$i$-Treffer'' for general sets $S$ in \cite{Oberschelp-1972-MPSemBNF19:55-67}.

First, we describe the relation to the intersection numbers $\alpha_i^\perp$ of the dual design $D^\perp$:

\begin{lemma}
	\label{lma:alpha_dual}
    Let $D$ be a $t$-$(v,k,\lambda)_q$ subspace design, $S\in\mathcal{L}(V)$, $s = \dim(S)$ and $i\in\{0,\ldots,k\}$.
    \begin{enumerate}[(a)]
    	\item\label{lma:alpha_dual:zero} For $i > s$ or $k-i > v-s$ we have
	\[
	    \alpha_i(S) = 0\text{.}
	\]
	\item\label{lma:alpha_dual:dual} 
	     For $i \leq s$ and $k-i \leq v-s$ we have
	    \[
		\alpha_i(S) = \alpha^{\perp}_{(v-s) - (k-i)}(S^\perp)\text{.}
	    \]
    \end{enumerate}
\end{lemma}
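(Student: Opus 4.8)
The plan is to establish (a) by elementary dimension bounds, and (b) by transporting the intersection condition through the anti-isomorphism $W \mapsto W^\perp$ of the subspace lattice.

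For (a), I would start from the fact that $B \cap S$ is a subspace of both $B$ and $S$, whence $\dim(B \cap S) \leq \min(k,s) \leq s$; thus no block contributes to $\alpha_i(S)$ once $i > s$. For the second bound I would invoke the dimension formula $\dim(B + S) = \dim(B) + \dim(S) - \dim(B \cap S) = k + s - i$. As $B + S \leq V$ forces $\dim(B + S) \leq v$, this yields $k - i \leq v - s$, so $\alpha_i(S) = 0$ whenever $k - i > v - s$. Both bounds are immediate once the dimension formula is at hand.

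For (b), the key ingredients are the orthogonality identities for the non-singular form $\beta$, namely $(B + S)^\perp = B^\perp \cap S^\perp$ together with $\dim(W^\perp) = v - \dim(W)$ and the involution $(W^\perp)^\perp = W$. Applying these to $B + S$ gives
\[
\dim(B^\perp \cap S^\perp) = \dim\bigl((B+S)^\perp\bigr) = v - \dim(B+S) = v - (k + s - i) = (v - s) - (k - i)\text{.}
\]
Hence a block $B \in D$ satisfies $\dim(B \cap S) = i$ exactly when its dual $B^\perp \in D^\perp$ satisfies $\dim(B^\perp \cap S^\perp) = (v-s) - (k-i)$. Since $\perp$ is an involution, $B \mapsto B^\perp$ is a bijection from $D$ onto $D^\perp$ (and $D^\perp$ is again a design by Fact~\ref{fct:dual}), while $S^\perp$ has dimension $v - s$. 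This bijection therefore restricts to a bijection between $\{B \in D \mid \dim(B \cap S) = i\}$ and $\{C \in D^\perp \mid \dim(C \cap S^\perp) = (v-s) - (k-i)\}$, so the two sets are equinumerous; this is precisely the claimed identity $\alpha_i(S) = \alpha^\perp_{(v-s)-(k-i)}(S^\perp)$.

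I do not anticipate a serious obstacle; the argument is bookkeeping with dimensions and the lattice anti-isomorphism. The one point deserving care is checking that the target index $(v-s)-(k-i)$ lies in the admissible range $\{0,\ldots,v-k\}$ of intersection-number indices for $D^\perp$: the inequality $(v-s)-(k-i) \geq 0$ is equivalent to the hypothesis $k - i \leq v - s$, and $(v-s)-(k-i) \leq v - k$ is equivalent to $i \leq s$. Thus the two hypotheses of (b) are exactly what render the dual intersection number meaningful, which also explains why (a) handles precisely the complementary range.
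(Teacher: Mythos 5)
Your proposal is correct and follows essentially the same route as the paper: part (a) from the dimension formula, and part (b) from the equivalence $\dim(B\cap S)=i \iff \dim(B+S)=k+s-i \iff \dim(B^\perp\cap S^\perp)=(v-s)-(k-i)$ combined with the bijection $B\mapsto B^\perp$ between $D$ and $D^\perp$. Your additional check that the target index lies in the admissible range $\{0,\ldots,v-k\}$ is a nice piece of bookkeeping the paper leaves implicit, but it does not change the argument.
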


\begin{proof}
Part~\ref{lma:alpha_dual:zero} is a direct consequence of the dimension formula.
For part~\ref{lma:alpha_dual:dual}, note that for all blocks $B\in D$
\begin{align*}
    \dim(B\cap S) = i 
    & \iff \dim(B + S) = k + s - i \\
    & \iff \dim(B^\perp \cap S^\perp) = v - (k + s - i)\text{.}
\end{align*}
\end{proof}

In the range where the dimension or the codimension of $S$ in $V$ is at most $t$, the intersection numbers are closely related to the numbers $\lambda_{i,j}$:

\begin{lemma}
\label{lma:alpha_unique}
Let $D$ be a $t$-$(v,k,\lambda)_q$ subspace design, $S$ a subspace of $V$ of dimension
\[
    s = \dim(S) \in\{0,\ldots,t\}\cup\{v-t,\ldots,v\}
\]
and $i\in\{0,\ldots,k\}$.
The intersection vector $\alpha(S)$ is uniquely determined by
\begin{equation}
    \label{eq:alpha_unique}
    \alpha_i(S) 
    = q^{(s-i)(k-i)} \frac{\gauss{s}{i}{q}\gauss{v-s}{k-i}{q}}{\gauss{v-t}{k-t}{q}}\cdot\lambda\text{.}
\end{equation}
For $i \leq s \leq t$ we have
\begin{equation}
    \label{eq:alpha_unique_small}
    \alpha_i(S) = \gauss{s}{i}{q}\cdot\lambda_{i,s-i}
\end{equation}
and for $k-i \leq v-s \leq t$ we have
\begin{equation}
    \label{eq:alpha_unique_large}
    \alpha_i(S) = q^{sk-iv}\gauss{v-s}{k-i}{q}\cdot\lambda_{k-i,(v-s) - (k-i)} \text{.}
\end{equation}
\end{lemma}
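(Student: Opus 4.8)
The plan is to prove the closed form \eqref{eq:alpha_unique} first in the small-dimension range $s\leq t$, then to transfer it to the small-codimension range $v-s\leq t$ by duality, and finally to read off \eqref{eq:alpha_unique_small} and \eqref{eq:alpha_unique_large} as mere restatements via Fact~\ref{fct:lambda_double}.

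For the small-dimension case I would first establish \eqref{eq:alpha_unique_small} by a direct counting argument, assuming $i\leq s\leq t$. Each block $B$ with $\dim(B\cap S)=i$ meets $S$ in a \emph{unique} $i$-subspace $I\leq S$, so the blocks counted by $\alpha_i(S)$ partition according to the value of $B\cap S$, and I would sum the contributions over the $\gauss{s}{i}{q}$ subspaces $I\in\gaussm{S}{i}$. The key observation is that, after fixing a complement $J$ of $I$ in $S$ (so $\dim J=s-i$, $I\cap J=\{\mathbf 0\}$ and $I+J=S$), one has $B\cap S=I$ if and only if $I\leq B$ and $B\cap J=\{\mathbf 0\}$: writing any $x\in B\cap S$ as $x_I+x_J$ with $x_I\in I\leq B$ forces $x_J=x-x_I\in B\cap J$, so $B\cap J=\{\mathbf 0\}$ yields $x=x_I\in I$. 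Hence the number of blocks with $B\cap S=I$ equals $\lambda_{i,s-i}$, which by Fact~\ref{fct:lambda_double} is independent of the choice of $I$ and $J$ and is defined precisely because $i+(s-i)=s\leq t$. This simultaneously gives \eqref{eq:alpha_unique_small} and the asserted independence of $\alpha(S)$ from the particular $S$ of dimension $s$. Substituting the closed form of $\lambda_{i,s-i}$ and collecting the power of $q$ then yields \eqref{eq:alpha_unique} for $s\leq t$; the degenerate range $i>s$ is absorbed automatically since $\gauss{s}{i}{q}=0$, in agreement with part~\ref{lma:alpha_dual:zero} of Lemma~\ref{lma:alpha_dual}.

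For the small-codimension case $v-s\leq t$ I would invoke duality. By part~\ref{lma:alpha_dual:dual} of Lemma~\ref{lma:alpha_dual}, $\alpha_i(S)=\alpha^{\perp}_{(v-s)-(k-i)}(S^\perp)$, and since $\dim(S^\perp)=v-s\leq t$ the small-dimension formula just proved applies to the dual design $D^\perp$, whose parameters $t\text{-}(v,v-k,\lambda^\perp)_q$ are supplied by Fact~\ref{fct:dual}. Setting $s'=v-s$, $k'=v-k$, $i'=(v-s)-(k-i)$ and using $s'-i'=k-i$, $k'-i'=s-i$, $v-s'=s$, the symmetry $\gaussm{n}{k}=\gaussm{n}{n-k}$ turns the dual formula into $q^{(s-i)(k-i)}\gauss{s}{i}{q}\gauss{v-s}{k-i}{q}\,\lambda^\perp/\gauss{v-t}{k}{q}$; inserting $\lambda^\perp=\gauss{v-t}{k}{q}\lambda/\gauss{v-t}{k-t}{q}$ cancels the $\gauss{v-t}{k}{q}$ factor and reproduces \eqref{eq:alpha_unique}.

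Finally, \eqref{eq:alpha_unique_large} is \eqref{eq:alpha_unique} rewritten through Fact~\ref{fct:lambda_double}: with $a=k-i$ and $b=(v-s)-(k-i)$ one has $a+b=v-s\leq t$, so $\lambda_{k-i,(v-s)-(k-i)}$ is well defined, and its closed form satisfies $v-a-b=s$ and $k-a=i$, so that $q^{sk-iv}\gauss{v-s}{k-i}{q}\,\lambda_{k-i,(v-s)-(k-i)}$ matches the right-hand side of \eqref{eq:alpha_unique} once the exponents are checked via $sk-iv+i(v-s-k+i)=(s-i)(k-i)$. I expect the main obstacle to be not any single step but the bookkeeping of the duality argument: keeping the two index substitutions consistent and verifying that the accumulated powers of $q$ together with the Gaussian binomial symmetries collapse exactly to the stated exponent $(s-i)(k-i)$.
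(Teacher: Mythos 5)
Your proposal is correct and follows essentially the same route as the paper: the same double count over pairs $(B,I)$ with $B\cap S=I$ (using the equivalence $B\cap S=I\iff I\leq B$ and $B\cap J=\{\mathbf{0}\}$ for a complement $J$ of $I$ in $S$) to get \eqref{eq:alpha_unique_small} in the range $s\leq t$, followed by the identical dualization via Lemma~\ref{lma:alpha_dual}\ref{lma:alpha_dual:dual} and Fact~\ref{fct:dual} with the substitutions $s^\perp=v-s$, $i^\perp=(v-s)-(k-i)$, and finally reading off \eqref{eq:alpha_unique_large} from the closed form in Fact~\ref{fct:lambda_double}. The only cosmetic difference is that you absorb the degenerate cases into vanishing Gaussian coefficients where the paper dispatches them up front via Lemma~\ref{lma:alpha_dual}\ref{lma:alpha_dual:zero}.
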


\begin{proof}
From Lemma~\ref{lma:alpha_dual}\ref{lma:alpha_dual:zero}, $\alpha_i(S) = 0$ for $i > s$ or $k-i > v-s$, in agreement with equation~\eqref{eq:alpha_unique}.
So we may assume $i \leq s$ and $k-i \leq v-s$.

\paragraph{Case 1}
We first consider the case $s\leq t$.
We count the set
\[
    X = \left\{(B,I) \in D \times \gauss{S}{i}{q} \middlerel{|} B \cap S = I\right\}
\]
in two ways.

There are $\alpha_i(S)$ blocks $B$ with $\dim(B\cap S) = i$, each one uniquely determining $I = B\cap S$.
This shows that $\#X$ equals the left hand side of equation~\eqref{eq:alpha_unique_small}.

On the other hand, there are $\gauss{s}{i}{q}$ ways to select the subspace $I$ of $S$.
For fixed $I$, let $J$ be a complement of $I$ in $S$.
Let $B\in D$ with $I \leq B$.
If $B\cap S \neq I$, then $\dim(B \cap S) > i$, and because of $\dim(J) + \dim(B \cap S) > (s-i) + i = s$, we get that $\dim(J \cap B) > 1$.
Hence $B \cap S = I$ is equivalent to $I \leq B$ and $J\cap B = \{\mathbf{0}\}$.
So the number of blocks intersecting $S$ in $I$ is $\lambda_{\dim(I),\dim(J)} = \lambda_{i,s-i}$, showing that $\#X$ equals the right hand side of equation~\eqref{eq:alpha_unique_small}.

So equation~\eqref{eq:alpha_unique_small} is shown, and replacing $\lambda_{i,s-i}$ with the formula given in Fact~\ref{fct:lambda_double} yields formula~\eqref{eq:alpha_unique}.

\paragraph{Case 2}
Now assume that $v-s \leq t$.
By Fact~\ref{fct:dual}, the dual design $D^\perp$ has the parameters $t^\perp = t$, $v^\perp = v$, $k^\perp = v-k$ and $\lambda^\perp = \gauss{v-t}{k}{q}/\gauss{v-t}{k-t}{q}\cdot\lambda$.
We further define $s^\perp = v-s$ and $i^\perp = (v-s)-(k-i)$.
Now
\[
\alpha_i(S)
= \alpha^\perp_{i^\perp}(S^\perp)
= q^{(s^\perp-i^\perp)(k^\perp-i^\perp)} \frac{\gauss{s^\perp}{i^\perp}{q}\gauss{v^\perp-s^\perp}{k^\perp-i^\perp}{q}}{\gauss{v^\perp-t^\perp}{k^\perp-t^\perp}{q}}\cdot\lambda^\perp\text{,}
\]
where the first equality is Lemma~\ref{lma:alpha_dual}\ref{lma:alpha_dual:dual}, and the second equality comes from applying equation~\eqref{eq:alpha_unique} (because of $\dim(S^\perp) = s^\perp \leq t^\perp$ we are in case~1 that we have already shown).
Plugging in the above defined expressions, this expression indeed simplifies to the right hand side of~\eqref{eq:alpha_unique}.
Finally equation~\eqref{eq:alpha_unique_large} can be verified using the formula from Fact~\ref{fct:lambda_double}.
\end{proof}

\begin{theorem}[{{$q$-analog of the Mendelsohn equations \cite[Th.~1]{Mendelsohn-1971}}}]
\label{thm:mendelson_equations}
Let $D$ be a $t$-$(v,k,\lambda)_q$ subspace design, $S$ a subspace of $V$ and $s = \dim(S)$.
For $i\in\{0,\ldots,t\}$ we have the following equation on the intersection numbers of $S$ in $D$:
\begin{equation}
\label{eq:mendelsohn}
	\sum_{j = i}^s \gauss{j}{i}{q} \alpha_j = \gauss{s}{i}{q}\lambda_i
\end{equation}
\end{theorem}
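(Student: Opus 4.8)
The plan is to establish the identity by a double count, directly $q$-analogizing Mendelsohn's original argument. The object to count is the set of incidences
\[
    X = \left\{(B,I) \in D \times \gauss{S}{i}{q} \middlerel{|} I \leq B\right\}\text{,}
\]
consisting of a block $B$ together with an $i$-subspace $I$ of $S$ that is contained in $B$. The whole proof amounts to evaluating $\#X$ in two ways.

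First I would fix the second coordinate. For any fixed $I \in \gauss{S}{i}{q}$, the condition $I \leq B$ selects precisely the blocks through the $i$-subspace $I$. Since $i \leq t$ by hypothesis, Fact~\ref{fct:integrality} guarantees that $D$ is also an $i$-$(v,k,\lambda_i)_q$ design, so the number of such blocks equals $\lambda_i$, independently of $I$. As there are $\gauss{s}{i}{q}$ choices for $I$, this yields $\#X = \gauss{s}{i}{q}\lambda_i$, the right-hand side of~\eqref{eq:mendelsohn}.

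Next I would fix the first coordinate. For a fixed block $B$, the subspaces $I$ counted in $X$ are exactly the $i$-subspaces satisfying $I \leq S$ and $I \leq B$, i.e.\ the $i$-subspaces of $B \cap S$, of which there are $\gauss{\dim(B\cap S)}{i}{q}$. Partitioning the blocks according to $j = \dim(B\cap S)$ and recalling that $\alpha_j$ counts the blocks with $\dim(B\cap S)=j$, this gives $\#X = \sum_{j}\gauss{j}{i}{q}\alpha_j$. Because $\gauss{j}{i}{q}=0$ for $j<i$ and, by Lemma~\ref{lma:alpha_dual}\ref{lma:alpha_dual:zero}, $\alpha_j = 0$ for $j>s$, the sum collapses to the range $j\in\{i,\ldots,s\}$, which is the left-hand side of~\eqref{eq:mendelsohn}. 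Equating the two evaluations of $\#X$ finishes the argument.

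Since both counts are genuinely elementary, there is no analytic obstacle here; the only point demanding care is the use of the hypothesis $i \leq t$, which is exactly what lets Fact~\ref{fct:integrality} supply a well-defined $\lambda_i$ and makes the count through a fixed $i$-subspace independent of the chosen subspace. One should also confirm that the correct $q$-analog of ``the number of $i$-subsets of $B\cap S$'' is $\gauss{\dim(B\cap S)}{i}{q}$, and check that the boundary cases (for instance $i=0$, or $S$ of large dimension) are absorbed automatically by the vanishing conventions for the Gaussian binomial coefficient and by Lemma~\ref{lma:alpha_dual}.
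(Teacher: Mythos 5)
Your proposal is correct and is essentially the same double-counting argument as the paper's own proof: the paper counts the set of pairs $(I,B)$ with $I \leq B\cap S$, which coincides with your $X$ up to the order of the coordinates, and evaluates it in exactly the two ways you describe. No further comment is needed.
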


\begin{proof}
We count the set
\[
X = \left\{ (I,B)\in \gauss{V}{i}{q} \times D \middlerel{|} I \leq B\cap S\right\}
\]
in two ways.

There are $\gauss{s}{i}{q}$ possibilities for the choice of $I\in\gaussm{S}{i}$.
By Fact~\ref{fct:integrality}, there are $\lambda_i$ blocks $B$ such that $I \leq B$, which shows that $\#X$ equals the right hand side of equation~\eqref{eq:mendelsohn}.

Fixing a block $B$, the number of $i$-subspaces $I$ of $B\cap S$ is $\gauss{\dim(B\cap S)}{i}{q}$.
Summing over the possibilities for $j = \dim(B\cap S)$, we see that $\#X$ also equals the left hand side of equation~\eqref{eq:mendelsohn}.
\end{proof}

\begin{remark}
In \cite[Th.~1]{Mendelsohn-1971}, $S$ was required to be a block.
For general $S$, the equations were given independently in \cite[Satz~2]{Oberschelp-1972-MPSemBNF19:55-67}.
\end{remark}

\begin{theorem}[{{$q$-analog of the Köhler equations \cite[Satz~1]{Koehler-1988_1989-DM73[1-2]:133-142}}}]
\label{thm:q_koehler}
Let $D$ be a $t$-$(v,k,\lambda)_q$ subspace design, $S$ a subspace of $V$ and $s = \dim(S)$.
For $i\in\{0,\ldots,t\}$, a parametrization of the intersection number $\alpha_i$ by $\alpha_{t+1},\ldots,\alpha_k$ is given by
\begin{multline*}
	\alpha_i
	= \gauss{s}{i}{q}\sum_{j = i}^t (-1)^{j-i} q^{\binom{j-i}{2}} \gauss{s-i}{j-i}{q}\lambda_j\\
	+ (-1)^{t+1-i} q^{\binom{t+1-i}{2}} \sum_{j = t+1}^k \gauss{j}{i}{q} \gauss{j-i-1}{t-i}{q}\alpha_j\text{.}
\end{multline*} 
\end{theorem}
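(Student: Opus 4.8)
The plan is to derive the formula directly from the $q$-Mendelsohn equations (Theorem~\ref{thm:mendelson_equations}) by taking one carefully chosen alternating combination of them, rather than inverting the whole triangular system at once. Writing the Mendelsohn equation of index $\ell$ as $\sum_{j=\ell}^k \gauss{j}{\ell}{q}\alpha_j = \gauss{s}{\ell}{q}\lambda_\ell$, I would fix $i\in\{0,\ldots,t\}$, multiply the $\ell$-th equation by $(-1)^{\ell-i} q^{\binom{\ell-i}{2}} \gauss{\ell}{i}{q}$, and sum over $\ell = i,\ldots,t$. Interchanging the order of summation rewrites the left-hand side as $\sum_{j\geq i} c_{i,j}\,\alpha_j$ with inner coefficient $c_{i,j} = \sum_{\ell=i}^{\min(j,t)} (-1)^{\ell-i} q^{\binom{\ell-i}{2}} \gauss{\ell}{i}{q}\gauss{j}{\ell}{q}$.

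The next step is to split the $j$-sum at $j=t$. For $j\leq t$ the inner sum runs over the full range $\ell=i,\ldots,j$, and the inversion identity \eqref{eq:formula_inverse_pascal_matrix} (with $a=i$ and $b=j$) gives $c_{i,j}=\delta_{i,j}$; hence all terms with $j\leq t$ collapse to the single summand $\alpha_i$. This is precisely the partial inversion of the Pascal-type matrix $(\gauss{j}{\ell}{q})$ underlying the statement, and it is what isolates $\alpha_i$.

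For $j>t$ the inner sum is truncated at $\ell=t$, so \eqref{eq:formula_inverse_pascal_matrix} no longer applies. Here I would first invoke the subset-of-a-subset identity $\gauss{\ell}{i}{q}\gauss{j}{\ell}{q}=\gauss{j}{i}{q}\gauss{j-i}{\ell-i}{q}$ (immediate from the $q$-factorial definition) to pull $\gauss{j}{i}{q}$ out, substitute $m=\ell-i$, and then apply Lemma~\ref{lma:alternating_gauss_sum} with $n=j-i$, $k=t-i$ to evaluate the incomplete alternating sum as $(-1)^{t-i} q^{\binom{t-i+1}{2}} \gauss{j-i-1}{t-i}{q}$. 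This yields $c_{i,j}=(-1)^{t-i} q^{\binom{t+1-i}{2}} \gauss{j}{i}{q}\gauss{j-i-1}{t-i}{q}$ for $j>t$, using $\binom{t-i+1}{2}=\binom{t+1-i}{2}$. On the right-hand side, the same product identity $\gauss{\ell}{i}{q}\gauss{s}{\ell}{q}=\gauss{s}{i}{q}\gauss{s-i}{\ell-i}{q}$ factors out $\gauss{s}{i}{q}$ from the $\lambda$-sum, leaving exactly $\gauss{s}{i}{q}\sum_{j=i}^t (-1)^{j-i} q^{\binom{j-i}{2}}\gauss{s-i}{j-i}{q}\lambda_j$.

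Collecting everything, the combined equation reads $\alpha_i + (-1)^{t-i} q^{\binom{t+1-i}{2}} \sum_{j=t+1}^k \gauss{j}{i}{q}\gauss{j-i-1}{t-i}{q}\alpha_j = \gauss{s}{i}{q}\sum_{j=i}^t (-1)^{j-i} q^{\binom{j-i}{2}}\gauss{s-i}{j-i}{q}\lambda_j$, and solving for $\alpha_i$ together with $-(-1)^{t-i}=(-1)^{t+1-i}$ gives the asserted identity. I expect the only real obstacle to lie in the bookkeeping of the $j>t$ case: recognizing that the truncation at $\ell=t$ forces the partial-sum Lemma~\ref{lma:alternating_gauss_sum} in place of the clean inversion \eqref{eq:formula_inverse_pascal_matrix}, and then matching signs and $q$-powers carefully so that the coefficient of $\alpha_j$ emerges exactly as $(-1)^{t+1-i} q^{\binom{t+1-i}{2}}\gauss{j}{i}{q}\gauss{j-i-1}{t-i}{q}$.
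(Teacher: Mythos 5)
Your proposal is correct and is essentially the paper's own proof: the paper inverts the upper triangular $q$-Pascal matrix $P_q$ via identity \eqref{eq:formula_inverse_pascal_matrix} and computes the entries of $P_q^{-1}A$ and $P_q^{-1}\mathbf{b}$ using the subset-of-a-subset identity together with Lemma~\ref{lma:alternating_gauss_sum}, which is exactly your linear combination $\sum_{\ell=i}^{t}(-1)^{\ell-i}q^{\binom{\ell-i}{2}}\gauss{\ell}{i}{q}\cdot(\text{equation }\ell)$ carried out one row at a time. The split at $j=t$, the collapse to $\delta_{i,j}$ for $j\le t$, and the evaluation of the truncated sum for $j>t$ all match the paper's computation, so only the packaging (a single row combination versus full Gauss reduction to reduced echelon form) differs.
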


\begin{proof}
The Mendelsohn equations (Th.~\ref{thm:mendelson_equations}) can be interpreted as a system of linear equations on the intersection vector of $S$ in $D$:
		\[
			\left(\begin{array}{ccccc|ccc}
				\gauss{0}{0}{q} & \gauss{1}{0}{q} & \gauss{2}{0}{q} & \hdots & \gauss{t}{0}{q} & \gauss{t+1}{0}{q} & \hdots & \gauss{k}{0}{q} \\
				0 & \gauss{1}{1}{q} & \gauss{2}{1}{q} & \hdots & \gauss{t}{1}{q} & \gauss{t+1}{1}{q} & \hdots  & \gauss{k}{1}{q} \\
				0 & 0 & \gauss{2}{2}{q} & \hdots & \gauss{t}{2}{q} & \gauss{t+1}{2}{q} & \hdots & \gauss{k}{2}{q} \\
				 \vdots & & \ddots & \ddots & \vdots & \vdots & & \vdots \\
				 0 & 0 & \hdots & 0 & \gauss{t}{t}{q} & \gauss{t+1}{t}{q} & \hdots & \gauss{k}{t}{q}
			\end{array}\right)
			 \begin{pmatrix}
				\alpha_0 \\ \alpha_1 \\ \alpha_2 \\ \vdots \\ \alpha_k
			 \end{pmatrix}
			 =
			 \begin{pmatrix}
			 	\gauss{s}{0}{q} \lambda_0 \\
			 	\gauss{s}{1}{q} \lambda_1 \\
			 	\gauss{s}{2}{q} \lambda_2 \\
				\vdots \\
			 	\gauss{s}{t}{q} \lambda_t \\
			 \end{pmatrix}
		\]

		This equation system has the form
		\begin{equation}
			\label{eq:mendelsohn_linear}
			(P_q \mid A) \cdot \mathbf{x} = \mathbf{b}
		\end{equation}
		where
		\begin{align*}
		    P_q & = \left(\gauss{j}{i}{q}\right)_{i,j\in\{0,\ldots,t\}}\in\Z^{(t+1)\times(t+1)}\text{,} \\
		    A & = \left(\gauss{j}{i}{q}\right)_{i\in\{0,\ldots,t\},j\in\{t+1,\ldots,k\}}\in\Z^{(t+1)\times (k-t)}\quad\text{and} \\
		    \mathbf{b} & = \left(\gauss{s}{i}{q}\lambda_i\right)_{i\in\{0,\ldots,t\}}\in\Z^{t+1}\text{.}
		\end{align*}
		The matrix $P_q$ is known as the \emph{upper triangular $q$-Pascal matrix}.
		By equation~\eqref{eq:formula_inverse_pascal_matrix}, $P_q$ is invertible with the inverse
		\[
			P_q^{-1} = \left((-1)^{j-i}q^{\binom{j-i}{2}}\gauss{j}{i}{q}\right)_{i,j\in\{0,\ldots,t\}}\text{.}
		\]

		After left multiplication by $P_q^{-1}$, equation~\eqref{eq:mendelsohn_linear} is equivalent to
		\begin{equation}
			\label{eq:mendelsohn_linear_reduced}
			(I \mid P_q^{-1}A)\cdot \mathbf{x} = P_q^{-1}\mathbf{b}\text{.}
		\end{equation}
		Numbering the columns of $A$ with $t+1,\ldots,k$, the entry in the $i$-th row and the $j$-th column of $P_q^{-1}A$ is
		\begin{align*}
			(P_q^{-1}A)_{i,j}
			& = \sum_{\nu = 0}^t (-1)^{\nu-i}q^{\binom{\nu-i}{2}}\gauss{\nu}{i}{q} \gauss{j}{\nu}{q} \\
			& = \sum_{\nu = i}^t (-1)^{\nu-i}q^{\binom{\nu-i}{2}}\gauss{j}{i}{q} \gauss{j-i}{\nu-i}{q} \\
			& = \gauss{j}{i}{q}\sum_{\nu = 0}^{t-i} (-1)^\nu q^{\binom{\nu}{2}} \gauss{j-i}{\nu}{q} \\
			& = \gauss{j}{i}{q} (-1)^{t-i} q^{\binom{t-i+1}{2}}\gauss{j-i-1}{t-i}{q}\text{,}
		\end{align*}
		where Lemma~\ref{lma:alternating_gauss_sum} was used in the last step.
		The $i$-th entry of $P_q^{-1}\mathbf{b}$ is
		\begin{align*}
			(P_q^{-1}\mathbf{b})_i
			& = \sum_{j = 0}^t (-1)^{j - i} q^{\binom{j - i}{2}} \gauss{j}{i}{q}\gauss{s}{j}{q}\lambda_j \\
			& = \gauss{s}{i}{q}\sum_{j = i}^t (-1)^{j - i} q^{\binom{j-i}{2}} \gauss{s-i}{j-i}{q}\lambda_j\text{.}
		\end{align*}
		Plugging these expressions into equation~\eqref{eq:mendelsohn_linear_reduced}, its rows evaluate to the Köhler equations.
	\end{proof}

\begin{remark}
	\begin{enumerate}[(a)]
		\item For ordinary block designs, Theorem~\ref{thm:q_koehler} was originally shown in \cite{Koehler-1988_1989-DM73[1-2]:133-142} in a lengthy induction proof.
The main result of the article \cite{Vroedt-1991-DM97[1-3]:161-165} was a simplified proof based on the notion of ``vectorproduct''.
In a slightly more general context, another induction proof as well as a proof based on the principle of inclusion and exclusion was given in \cite{Trung-Wu-Mesner-1996-JSPI56[2]:257-268}.

		\item Our proof can be interpreted as transforming the linear system of Mendelsohn equations to row reduced echelon form by Gauss reduction.
		Since this method is directly applicable also to block designs, it provides a short and systematic proof for the original Köhler equations.
	\end{enumerate}
\end{remark}

\subsection{High order intersection numbers}
For block designs, ``high order'' versions of the numbers $\lambda_{i,j}$ \cite{Trung-Wu-Mesner-1996-JSPI56[2]:257-268} and the intersection numbers $\alpha_i$ \cite{Mendelsohn-1971} (see also \cite{Trung-Wu-Mesner-1996-JSPI56[2]:257-268, Betten-Dissertation-1998}) have been introduced.
For that matter, some positive integer $\ell$ is fixed, and in the definitions the block $B$ is replaced by the intersection of an $\ell$-tuple of blocks.

The same is possible in our $q$-analog situation:
For a $t$-$(v,k,\lambda)_q$ subspace design $D$, non-negative integers $i$ and $j$ with $i + j \leq t$ and subspaces $I\in\gaussm{V}{i}$ and $J\in\gaussm{V}{j}$ with $I\cap J = \{\mathbf{0}\}$, the number
\[
	\lambda^{(\ell)}_{i,j} = \#\left\{\mathcal{B}\in\binom{D}{\ell} \middlerel{|} I \leq \bigcap\mathcal{B}\text{ and }\bigcap\mathcal{B}\cap J = \{\mathbf{0}\}\right\}
\]
does not depend on the choice of $I$ and $J$.
For $S\in\mathcal{L}(V)$ and $i\in\{0,\ldots,k\}$ the $i$-th \emph{high order intersection number} of $S$ in $D$ is defined as
\[
	\alpha^{(\ell)}_i(S) = \#\left\{\mathcal{B}\in\binom{D}{\ell} \middlerel{|} \dim\left(\bigcap\mathcal{B} \cap S\right) = i\right\}\text{.}
\]
Clearly, $\lambda_{i,j} = \lambda^{(1)}_{i,j}$ and $\alpha_i(S) = \alpha^{(1)}_i(S)$, so the high order versions generalize the basic versions considered so far.

Replacing $\alpha_i$ by $\alpha_i^{(\ell)}$, $\lambda_{i,j}$ by $\lambda^{(\ell)}_{i,j}$ and $\lambda_i$ by $\binom{\lambda_i}{\ell}$, we get high order versions of the statements of Fact~\ref{fct:lambda_double} (except the closed formula), Th.~\ref{thm:mendelson_equations} and Th.~\ref{thm:q_koehler}.
A partial high order version of Lemma~\ref{lma:alpha_unique} is the following:
\begin{lemma}
	Let $D$ be a $t$-$(v,k,\lambda)_q$ subspace design, $S$ a subspace of $V$ of dimension $s = \dim(S)$ and $\ell$ a positive integer.
	For $s \leq t$ or $s\geq v-t$, the high order intersection vector $\alpha^{(\ell)}(S)$ is uniquely determined.
	In the case $s \leq t$,
	\[
		\alpha^{(\ell)}_i(S)
		= \begin{cases}
		    \gauss{s}{i}{q} \cdot\lambda^{(\ell)}_{i,s-i} & \text{for all }i\in\{0,\ldots,s\} \text{ and} \\
		    0 & \text{for all }i\in\{s+1,\ldots,k\}\text{.}
		\end{cases}
	\]
\end{lemma}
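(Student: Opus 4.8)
The plan is to mirror the two-case structure of the proof of Lemma~\ref{lma:alpha_unique}, replacing throughout a single block $B$ by the intersection $\bigcap\mathcal B$ of an $\ell$-subset $\mathcal B\in\binom{D}{\ell}$. The case $s\le t$ is handled by a direct double count and yields the explicit formula, while the case $s\ge v-t$ is to be reduced to it by duality; as I explain below, it is this reduction that I expect to be the genuine obstacle, which is presumably why the statement is only \emph{partial} and gives no closed form there.

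For $s\le t$, I would count
\[
    X = \left\{(\mathcal B,I)\in\binom{D}{\ell}\times\gauss{S}{i}{q}\middlerel{|}\bigcap\mathcal B\cap S = I\right\}
\]
in two ways. Each $\mathcal B$ with $\dim(\bigcap\mathcal B\cap S)=i$ contributes the single pair $(\mathcal B,\bigcap\mathcal B\cap S)$, so $\#X=\alpha^{(\ell)}_i(S)$. Conversely, fix $I\in\gauss{S}{i}{q}$ and a complement $J$ of $I$ in $S$, so $\dim J=s-i$. Exactly as in Lemma~\ref{lma:alpha_unique}, and using only that $\bigcap\mathcal B$ is a subspace, one checks that $\bigcap\mathcal B\cap S=I$ is equivalent to $I\le\bigcap\mathcal B$ and $\bigcap\mathcal B\cap J=\{\mathbf 0\}$: any $x\in\bigcap\mathcal B\cap S$ decomposes along $S=I\oplus J$, and its $J$-component lies in $\bigcap\mathcal B\cap J$, forcing $x\in I$. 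Hence the number of admissible $\mathcal B$ for a fixed $I$ is $\lambda^{(\ell)}_{i,s-i}$, which is well defined because $i+(s-i)=s\le t$, so that $\#X=\gauss{s}{i}{q}\lambda^{(\ell)}_{i,s-i}$. Comparing the two counts gives the asserted value for $i\le s$, and $\alpha^{(\ell)}_i(S)=0$ for $i>s$ is immediate from $\dim(\bigcap\mathcal B\cap S)\le s$. Uniqueness then follows from the high order analog of Fact~\ref{fct:lambda_double}, since the numbers $\lambda^{(\ell)}_{a,b}$ with $a+b\le t$ are pinned down by the recurrence $\lambda^{(\ell)}_{a,b+1}=\lambda^{(\ell)}_{a,b}-q^b\lambda^{(\ell)}_{a+1,b}$ together with $\lambda^{(\ell)}_{a,0}=\binom{\lambda_a}{\ell}$.

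For $s\ge v-t$ the natural move is to pass to the dual design $D^\perp$, where $\dim(S^\perp)=v-s\le t$, and to invoke the case just proved. Here lies the main obstacle: the clean $\ell=1$ duality of Lemma~\ref{lma:alpha_dual} does not transport high order intersection numbers, because under $\mathcal B\mapsto\mathcal B^\perp=\{B^\perp\mid B\in\mathcal B\}$ one has $(\bigcap\mathcal B)^\perp=\sum_{B\in\mathcal B}B^\perp$, so the intersection $\bigcap\mathcal B$ dualizes to the \emph{span} of $\mathcal B^\perp$ rather than to an intersection in $D^\perp$. Thus the quantity $\dim(\bigcap\mathcal B\cap S)$ corresponds in $D^\perp$ to a span-based statistic, and the $\ell=1$ reduction of Lemma~\ref{lma:alpha_unique} cannot be applied verbatim. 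The work for this case is therefore to set up the correct correspondence — equivalently, to prove the appropriate high order refinement of Lemma~\ref{lma:alpha_dual}, or to run a direct complementary count using that $S$ has codimension at most $t$ — and to verify that the resulting statistic is again controlled by the $\lambda^{(\ell)}_{a,b}$ with $a+b\le t$; this is the step I expect to require the most care, in contrast to the routine count for $s\le t$.
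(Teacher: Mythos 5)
The paper offers no proof of this lemma at all: it only remarks beforehand that the high order versions of Fact~\ref{fct:lambda_double}, Theorem~\ref{thm:mendelson_equations} and Theorem~\ref{thm:q_koehler} follow by ``trivial adjustments'' of the $\ell=1$ proofs, and afterwards that uniqueness in the range $s\ge v-t$ ``can be checked'' via the high order Mendelsohn equations. Your treatment of the case $s\le t$ is correct and is exactly the intended adjustment of Case~1 of the proof of Lemma~\ref{lma:alpha_unique}: the double count of pairs $(\mathcal B,I)$ with $\bigcap\mathcal B\cap S=I$ goes through verbatim because the equivalence of $\bigcap\mathcal B\cap S=I$ with ($I\le\bigcap\mathcal B$ and $\bigcap\mathcal B\cap J=\{\mathbf 0\}$) uses only that $\bigcap\mathcal B$ is a subspace, the count $\lambda^{(\ell)}_{i,s-i}$ is legitimate since $i+(s-i)=s\le t$, and these numbers are pinned down by the recurrence with base $\lambda^{(\ell)}_{i,0}=\binom{\lambda_i}{\ell}$. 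This settles the displayed formula and the uniqueness claim for $s\le t$.

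The genuine gap is the uniqueness claim for $s\ge v-t$, which you explicitly leave open. Your diagnosis of why the $\ell=1$ argument fails is accurate --- under duality $\bigcap\mathcal B$ corresponds to the span $\sum_{B\in\mathcal B}B^\perp$, so Lemma~\ref{lma:alpha_dual} does not transport high order intersection numbers --- but you supply no replacement, so half of the stated lemma remains unproved. The paper's own (unproved) suggestion for this case is not duality but the high order Mendelsohn equations $\sum_{j}\gauss{j}{i}{q}\alpha^{(\ell)}_j=\gauss{s}{i}{q}\binom{\lambda_i}{\ell}$ for $i\in\{0,\ldots,t\}$. Be aware, though, that this route is not obviously sufficient either: the paper itself concedes that for $k-i>v-s$ one no longer has $\alpha^{(\ell)}_i(S)=0$, so the forced zeros that reduced the $\ell=1$ system to a triangular one with at most $t+1$ unknowns are gone, and $t+1$ equations face up to $k+1$ unknowns. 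Indeed, for $s=v$ and $\ell=2$ the claim says that the distribution of $\dim(B\cap B')$ over pairs of blocks is determined by the design parameters, which in the $q=1$ setting with $k\ge t+2$ is generally false (non-isomorphic designs with identical parameters routinely differ in their block intersection distributions). So your instinct that this case requires the most care is well founded; the $s\ge v-t$ half of the statement deserves scrutiny, not merely a longer proof.
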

Using the high order Mendelsohn equations, it can be checked that in the range $s\geq v-t$, the intersection vector $\alpha^{(\ell)}_i(S)$ is still unique.
However, the formula gets more complicated than in Lemma~\ref{lma:alpha_unique}.
This is indicated by the fact that for $k-i > v-s$, we don't necessarily get $\alpha_i^{(\ell)}(S) = 0$ any more.

Since the high order versions complicate the presentation, their benefit is not entirely clear and the proofs only need trivial adjustments, we decided to go with the basic versions in the main part.

\section{Non-existence results for block designs}
For ordinary block designs, the Mendelsohn equations have been used to show that certain admissible parameter sets are not realizable.
Below, we give three such examples.
These results are not new, but the proofs are new alternatives or simplify the previous ones.

\begin{theorem}
\label{thm:3_11_5_2}
The parameter set $3$-$(11,5,2)$ is admissible, but not realizable.
\end{theorem}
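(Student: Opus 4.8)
The plan is to settle admissibility by a one-line computation and then to extract non-realizability entirely from the Mendelsohn equations (Theorem~\ref{thm:mendelson_equations}) specialised to $q=1$. For admissibility I would evaluate the block numbers $\lambda_i$ via Fact~\ref{fct:integrality} with $t=3$, $v=11$, $k=5$, $\lambda=2$, getting $\lambda_3 = 2$, $\lambda_2 = 6$, $\lambda_1 = 15$ and $\lambda_0 = 33$. These are all integers, so the parameter set is admissible.

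For non-realizability, suppose a $3$-$(11,5,2)$ design $D$ exists and fix an arbitrary block $B_0\in D$. I would set $S = B_0$, so $s = 5$, and record $\alpha_5(B_0) = 1$ since $D$ is a simple set of blocks. Writing out the Mendelsohn equations~\eqref{eq:mendelsohn} for $i = 0,1,2,3$ produces a linear system in $\alpha_0,\dots,\alpha_4$. The decisive step is that, after back-substitution, one equation forces $\alpha_1 = -4\alpha_4$; combined with $\alpha_1,\alpha_4 \geq 0$ this pins down the intersection vector uniquely as $\alpha(B_0) = (2,0,20,10,0,1)$. The two consequences I actually use are $\alpha_0(B_0) = 2$ and $\alpha_4(B_0) = 0$, and since $B_0$ was arbitrary these hold for \emph{every} block of $D$.

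The contradiction then follows from a pigeonhole observation on the complement. The $\alpha_0(B_0) = 2$ blocks disjoint from $B_0$ are $5$-subsets of the $6$-element set $V\setminus B_0$, and any two distinct $5$-subsets of a $6$-set meet in exactly $4$ elements. Hence these two disjoint blocks intersect each other in $4$ points. Applying the block-independent value $\alpha_4 = 0$ to one of them asserts that no block meets it in $4$ points, contradicting the existence of the other. Therefore no $3$-$(11,5,2)$ design exists.

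The main obstacle, and the one spot needing a little ingenuity, is that the Mendelsohn equations for a single block are by themselves consistent: they admit the non-negative integer solution $(2,0,20,10,0,1)$, so nothing contradictory can be read off from one block in isolation. The trick is to distill from this solution two robust structural facts — exactly two disjoint blocks, and no two blocks meeting in $4$ points — and then to play them against one another using that both hold for every block simultaneously. Compared with the original inductive non-existence arguments, essentially all the work here is the sign condition $\alpha_1 = -4\alpha_4 \geq 0$ and the elementary remark about $5$-subsets of a $6$-set.
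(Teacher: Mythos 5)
Your proof is correct and follows essentially the same route as the paper: admissibility from the integrality of the $\lambda_i$, then the unique intersection vector $(2,0,20,10,0,1)$ of a block forced by $\alpha_1=-4\alpha_4\geq 0$ (the paper reads this off from its K\"ohler equations, which are exactly the reduced Mendelsohn system you describe), and finally the identical contradiction between the two blocks lying in the $6$-element complement and $\alpha_4=0$. Incidentally, your value $\lambda_2=6$ is the correct one; the paper's displayed $\lambda_2=5$ is a typo, as its own K\"ohler equation for $\alpha_0$ confirms.
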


\begin{proof}
The numbers
\[
	\lambda_0 = 33,\quad \lambda_1 = 15,\quad \lambda_2 = 5,\quad \lambda_3 = \lambda = 2
\]
are all integers, so the parameter set is admissible.

To show that the parameter set is not realizable, let $V = \{1,\ldots,11\}$ and assume that there is a design on $V$ of these parameters.
The Köhler equations for the intersection vector of a block $B$ are
\begin{align*}
	\alpha_0 & = -2 + \alpha_4 + 4\alpha_5\text{,} &
	\alpha_1 & = 15 - 4\alpha_4 - 15\alpha_5\text{,} \\
	\alpha_2 & = 6\alpha_4 + 20\alpha_5\text{,} &
	\alpha_3 & = 20 - 4\alpha_4 - 10\alpha_5\text{.}
\end{align*}
Since $B$ is a block, $\alpha_5 = 1$, and because of $\alpha_1 \geq 0$, the second equation forces $\alpha_4 = 0$.
So the unique intersection vector is
\[
	\alpha(B) = (2,0,20,10,0,1)\text{.}
\]

In particular, there are $\alpha_0 = 2$ blocks contained in $V\setminus B$.
Because of $\#(V\setminus B) = 11 - 5 = 6$, those two blocks intersect in exactly $4$ points, which contradicts $\alpha_4 = 0$.
\end{proof}

\begin{remark}
\begin{enumerate}[(i)]
\item For block designs, the considered parameter set $2$-$(11,5,2)$ is the smallest admissible parameter set (in terms of $v$) which is not realizable, compare \cite[p.~36 ff.]{Mathon-Rosa-2designs-2007} and \cite[Table~4.44]{Khosrovshahi-Laue-2007}.
\item Theorem~\ref{thm:3_11_5_2} is the main result of \cite{Dehon-1976-DM15[1]:23-25}, where it was shown using the same intersection vector and additionally the classification of $2$-$(10,4,2)$ designs.
Our above proof simplifies this reasoning.
\end{enumerate}
\end{remark}

\begin{theorem}
    \label{thm:family_t_4}
    Let $n \geq 5$ be an integer such that $4\nmid n$.
    Then the parameters
    \[
	t = 4\text{,}\quad v = \binom{n}{2} + 2\text{,}\quad k = n+1\text{,}\quad \lambda = 2
    \]
    are admissible, but not realizable.
\end{theorem}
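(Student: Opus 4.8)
The plan is to mirror the proof of Theorem~\ref{thm:3_11_5_2}: first verify admissibility by computing the parameters $\lambda_i$ from Fact~\ref{fct:integrality}, and then reach a contradiction from the intersection numbers of a single block via the ordinary (i.e.\ $q=1$) Köhler equations of Theorem~\ref{thm:q_koehler}.

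For admissibility, I would evaluate $\lambda_i=\binom{v-i}{t-i}/\binom{k-i}{t-i}\cdot\lambda$ at $q=1$. Using $v-2=\binom n2$, $v-3=\tfrac{(n-2)(n+1)}2$ and similar simplifications, these collapse to
\[
\lambda_4=2,\quad \lambda_3=n+1,\quad \lambda_2=\binom{n+1}2,\quad \lambda_1=\frac{(n^2-n+2)(n+1)}4,\quad \lambda_0=\frac{(n^2-n+2)(n^2-n+4)}8 .
\]
A direct check shows $\lambda_0,\lambda_2,\lambda_3,\lambda_4\in\Z$ for every $n$, so the only binding integrality condition is $\lambda_1\in\Z$. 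Since $n^2-n+2$ is always even, $4\mid(n^2-n+2)(n+1)$ holds precisely when $\binom n2+1$ or $n+1$ is even, which fails exactly for $n\equiv 0\pmod 4$. Hence the parameter set is admissible if and only if $4\nmid n$.

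For non-realizability, assume a design $D$ exists and fix a block $B$, so $s=\lvert B\rvert=k=n+1$; since $B\in D$ meets itself in all $k$ of its points, $\alpha_k(B)\ge 1$. Specializing Theorem~\ref{thm:q_koehler} to $q=1$, $i=0$, $s=k$, $t=4$ gives
\[
\alpha_0=\sum_{j=0}^{4}(-1)^j\binom kj\lambda_j-\sum_{j=5}^{k}\binom{j-1}4\alpha_j .
\]
Every $\alpha_j\ge 0$ and the $j=k$ term alone contributes $\binom{k-1}4\alpha_k\ge\binom{k-1}4$, so $\alpha_0\le\beta_0-\binom{k-1}4$, where $\beta_0:=\sum_{j=0}^4(-1)^j\binom kj\lambda_j$. (Note no assumption on repeated blocks is needed, only $\alpha_k\ge 1$.) The contradiction will be that this upper bound is strictly negative while $\alpha_0\ge 0$.

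The one laborious step — and the main obstacle — is the polynomial inequality $\beta_0<\binom n4$ (recall $k-1=n$). Substituting the closed forms above, the degree-$4$ parts cancel, since both $\beta_0$ and $\binom n4$ have leading coefficient $n^4/24$; thus $\binom n4-\beta_0$ is a cubic in $n$. I would present it in Newton form anchored at $n=5$, namely $\binom n4-\beta_0=2+3(n-5)+2\binom{n-5}2+\tfrac12\binom{n-5}3$, whose coefficients are all non-negative, so that $\binom n4-\beta_0\ge 2>0$ for every $n\ge 5$. This yields $\alpha_0\le\beta_0-\binom{k-1}4<0$, the desired contradiction. As sanity checks, $\beta_0=3,10,25$ against $\binom n4=5,15,35$ for $n=5,6,7$, forcing $\alpha_0\le -2,-5,-10$; this is the same $\alpha_0$-based obstruction already exploited for $3$-$(11,5,2)$, now made into a purely numerical sign contradiction rather than a geometric one.
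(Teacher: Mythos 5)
Your proposal is correct and follows essentially the same route as the paper: the same computation of $\lambda_0,\ldots,\lambda_4$ for admissibility, and the same Köhler equation with $i=0$ applied to a block $B$ (so $s=k=n+1$, $\alpha_k\ge 1$) to force $\alpha_0<0$. The only cosmetic difference is that the paper factors the alternating sum explicitly as $\tfrac{(n-1)(n-2)^2(n-3)}{24}$ and obtains $\alpha_0\le-\tfrac{(n-1)(n-2)(n-3)}{12}$, whereas you certify the same inequality via a Newton-form expansion with non-negative coefficients.
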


\begin{proof}
    We compute
    \begin{align*}
        \lambda_3 &
	= \lambda_4\cdot \frac{v-3}{k-3}
	= 2\cdot \frac{\frac{n(n-1)}{2} - 1}{n-2}
	= n + 1\text{,} \\
	\lambda_2 & = \lambda_3 \cdot \frac{v-2}{k-2}
	= \frac{(n+1) \frac{n(n-1)}{2}}{n-1} = \frac{n(n+1)}{2}\text{,} \\
	\lambda_1 & = \lambda_2 \cdot \frac{v-1}{k-1}
	= \frac{n(n+1)}{2}\cdot \frac{\frac{n(n-1)}{2} + 1}{n} = \frac{(n+1)(n^2 - n + 2)}{4}\text{,} \\
	\lambda_0 & = \lambda_1 \cdot \frac{v}{k} = \frac{(n+1)(n^2 - n + 2)}{4} \cdot \frac{\frac{n(n-1)}{2} + 2}{n+1} = \frac{(n^2 - n + 2)(n^2 - n + 4)}{8}\text{.}
    \end{align*}
    To see that the parameters are admissible, we have to check that the values $\lambda_i$ are integral.
    This is clear for $\lambda_4$, $\lambda_3$ and $\lambda_2$.
    The integrality of $\lambda_1$ follows from checking the three possibilities $n\equiv 1$, $n\equiv 2$ and $n\equiv 3\pmod 4$.
    For $\lambda_0$ we note that $n^2 - n$ is always even, so one of the factors $n^2 - n + 2$ and $n^2 - n + 4$ is divisible by $4$ and the other one is even.

    We consider the Köhler equation with $i=0$ for a block $B$ (so $s = k = n+1$ and $\alpha_k = 1$).
    Because of
    \begin{multline*}
    \binom{s}{i}\sum_{j = i}^t (-1)^{j-i} \binom{s-i}{j-i} \lambda_j \\
    = \frac{(n^2 - n + 2) (n^2 - n + 4)}{8}
    - \frac{(n+1)^2(n^2 - n + 2)}{4}
    + \frac{n^2(n+1)^2}{4} \\
    - \frac{(n-1)n(n+1)^2}{6}
    + \frac{(n-2)(n-1)n(n+1)}{12} \\
    = \frac{(n-1)(n-2)^2(n-3)}{24}\text{,}
    \end{multline*}
    we get the contradiction
    \begin{align*}
    	\alpha_0 & = \frac{(n-1)(n-2)^2(n-3)}{24} - \sum_{j=5}^n \binom{j-1}{4}\alpha_j - \binom{n}{4} \\
	& = -\frac{(n-1)(n-2)(n-3)}{12} - \sum_{j=5}^n \binom{j-1}{4}\alpha_j < 0\text{.}
    \end{align*}
\end{proof}

\begin{remark}
    Designs with the parameters from Theorem~\ref{thm:family_t_4} would be tight, since $t = 4 = 2s$ with $s = 2$ and $\lambda_0 = \binom{v}{s}$.
    The existence of this series was ruled out in \cite[Cor. of Th.~5]{RayChaudhuri-Wilson-1975-OsakaJM12[3]:737-744}, the parameters are explicitly stated as $S_2\!\left(4,k,2+\frac{1}{2}(k-1)(k-2)\right)$ in \cite[p.~738]{RayChaudhuri-Wilson-1975-OsakaJM12[3]:737-744}.
\end{remark}

\begin{theorem}
    \label{thm:family_t_3}
    Let $n \geq 2$ be an integer.
    Then the parameters
    \[
	t = 3\text{,}\quad v = (2n-1)(4n-1) + 1\text{,}\quad k = 4n-1\text{,}\quad \lambda = 1
    \]
    are admissible, but not realizable.
\end{theorem}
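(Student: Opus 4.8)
The plan is to mirror the strategy of Theorems~\ref{thm:3_11_5_2} and~\ref{thm:family_t_4}: first verify admissibility by computing the $\lambda_i$ via Fact~\ref{fct:integrality}, and then exploit the intersection vector of a single block together with the classical (i.e.\ $q=1$) Mendelsohn equations of Theorem~\ref{thm:mendelson_equations} to force one intersection number to be negative.

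First I would record $v = 8n^2 - 6n + 2$ together with the factorizations $v - 1 = (2n-1)(4n-1)$ and $v - 2 = 2n(4n-3)$, as well as $k-1 = 2(2n-1)$ and $k-2 = 4n-3$. Feeding these into the recursion $\lambda_i = \lambda_{i+1}\cdot\frac{v-i}{k-i}$ of Fact~\ref{fct:integrality} yields
\[
\lambda_3 = 1,\qquad \lambda_2 = 2n,\qquad \lambda_1 = n(4n-1),\qquad \lambda_0 = n(8n^2 - 6n + 2),
\]
all of which are manifestly integers. Hence the parameter set is admissible; in contrast to Theorem~\ref{thm:family_t_4}, no congruence case distinction is needed here.

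Now assume a design $D$ with these parameters exists and fix a block $B$. Since $\lambda = \lambda_3 = 1$, the design is a Steiner system, so any block distinct from $B$ meets $B$ in at most two points (three common points would lie in two blocks). Thus the intersection vector of $B$ satisfies $\alpha_j(B) = 0$ for $3 \leq j \leq k-1$ and $\alpha_k(B) = 1$. Plugging $s = k$ into the Mendelsohn equations $\sum_{j=i}^{s}\binom{j}{i}\alpha_j = \binom{s}{i}\lambda_i$ and using these vanishings, the row $i = 2$ collapses to $\alpha_2 + \binom{k}{2} = \binom{k}{2}\lambda_2$, so
\[
\alpha_2 = \binom{k}{2}(\lambda_2 - 1) = (4n-1)(2n-1)^2,
\]
while the row $i = 1$ reads $\alpha_1 + 2\alpha_2 + k = k\lambda_1$, giving $\alpha_1 = k(\lambda_1 - 1) - 2\alpha_2$.

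The decisive step is the evaluation of this last expression. Substituting the values above and factoring yields $\alpha_1 = -(n-1)(4n-3)(4n-1)$, which for every integer $n \geq 2$ is strictly negative, contradicting $\alpha_1 \geq 0$; hence no such design exists. I expect the only genuine work to be the polynomial bookkeeping in this final factorization, since the structural input (the Steiner-system vanishing of $\alpha_3,\ldots,\alpha_{k-1}$ and the two Mendelsohn rows) is immediate; everything could equally be phrased through the $q=1$ specialization of the Köhler equations (Theorem~\ref{thm:q_koehler}), exactly as in the two preceding proofs.
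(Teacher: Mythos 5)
Your proof is correct, and both halves check out: the $\lambda_i$ agree with the paper's (your $\lambda_0 = n(8n^2-6n+2)$ is the paper's $2n(4n^2-3n+1)$), and the final factorization $\alpha_1 = k(\lambda_1-1) - 2\binom{k}{2}(\lambda_2-1) = -(n-1)(4n-3)(4n-1) < 0$ is right. The route differs from the paper's in a small but genuine way. The paper plugs $i=1$, $s=k$ into the Köhler equations and uses only the nonnegativity of $\alpha_4,\ldots,\alpha_{k-1}$ together with $\alpha_k=1$: the constant term evaluates to $+(n-1)(4n-1)(4n-3)$, the $j=k$ term contributes $-(4n-1)\binom{4n-3}{2} = -2(n-1)(4n-1)(4n-3)$, and the remaining terms only push $\alpha_1$ further down. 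You instead invoke the Steiner property $\lambda=1$ to force $\alpha_j=0$ for $3\le j\le k-1$ outright, which lets you back-substitute through just the two raw Mendelsohn rows $i=2$ and $i=1$ without inverting the Pascal matrix. Your version is more elementary (no Köhler machinery) but leans on $\lambda=1$; the paper's version is slightly more robust in that it never needs the blocks to pairwise meet in at most two points, only that intersection numbers are nonnegative. Both land on the same negative value of $\alpha_1$, so the computations corroborate each other.
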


\begin{proof}
    We compute
    \begin{align*}
    	\lambda_2
	& = \lambda_3\cdot \frac{v-2}{k-2}
	= \frac{(2n-1)(4n-1) - 1}{4n-3}
	= 2n\text{,} \\
	\lambda_1
	& = \lambda_2 \cdot \frac{v-1}{k-1}
	= 2n\cdot \frac{(2n-1)(4n-1)}{4n-2}
	= n(4n - 1)\text{,} \\
	\lambda_0
	& = \lambda_1 \cdot \frac{v}{k}
	= n(4n - 1)\frac{(2n-1)(4n-1) + 1}{4n-1} = 2n(4n^2 - 3n + 1)\text{.}
    \end{align*}
    So the parameters are admissible.

    We consider the Köhler equation with $i = 1$ for a block $S$ (so $s = k = 4n-1$ and $\alpha_k = 1$).
    Because of
    \begin{multline*}
	\binom{s}{i}\sum_{j = i}^t (-1)^{j-i} \binom{s-i}{j-i} \lambda_j \\
	= (4n - 1)\left(n(4n-1) - (4n - 2)\cdot 2n + (2n - 1)(4n - 3)\right) \\
	= (n-1)(4n-1)(4n-3)\text{,}
    \end{multline*}
    we get the contradiction
    \begin{multline*}
    	\alpha_1 = (n-1)(4n-1)(4n-3) - \sum_{j=4}^{4n-2}j\binom{j - 2}{2}\alpha_j - (4n-1)\binom{4n-3}{2} \\
	= -(n-1)(4n-1)(4n-3) - \sum_{j=4}^{4n-2}j\binom{j - 2}{2}\alpha_j < 0\text{.}
    \end{multline*}
\end{proof}

\begin{remark}
    Alternatively, Theorem~\ref{thm:family_t_3} can be shown as follows.
    According to \cite[Th.~5.6]{Colbourn-Mathon-SteinerSystems-2007}, the existence of a $3$-$(v,k,1)$ design implies $\binom{v}{3} \geq \frac{v}{k}\binom{v-1}{1}\binom{k}{3}$.
    In our case, this yields the contradiction
    \[
        \frac{2}{3}n(2n-1)(4n-3)(4n-1)(4n^2 - 3n + 1) \geq \frac{2}{3}(2n-1)^2(4n-3)(4n-1)(4n^2-3n+1)\text{.}
    \]
\end{remark}

\section{Intersection structure of a $q$-analog of the Fano plane}
It is a notorious open problem if for any prime power $q$, a \emph{$q$-analog of the Fano plane} exists, which is a Steiner system with admissible parameters $2$-$(7,3,1)_q$.
In this section, we compute the intersection vector distribution of such a Steiner system and discuss its implications.
Thereby, $\Phi_n\in\Z[q]$ will denote the $n$-th cyclotomic polynomial in $q$.
Since all Gaussian binomial coefficients are a product of cyclotomic polynomials, they often allow a compact representation of the arising polynomials in factorized form.

In the following, let $D$ be a $2$-$(7,3,1)_q$ subspace design.
We have
\begin{align*}
	\lambda_0 & = q^8 + q^6 + q^5 + q^4 + q^3 + q^2 + 1 = \Phi_6\Phi_7 \text{,} \\
	\lambda_1 & = q^4 + q^2 + 1 = \Phi_2\Phi_4\text{,} \\
	\lambda_2 & = 1\text{.}
\end{align*}

As a showcase, for $s = 4$ the Köhler equations yield
\begin{align*}
\alpha_0 & = (q^8 - q^7 + q^3) - q^3\alpha_3\text{,} \\
\alpha_1 & = (q^7 + q^6 + q^5 - q^3 - q^2 - q) + (q^3 + q^2 + q)\alpha_3\text{,} \\
\alpha_2 & = (q^4 + q^3 + 2q^2 + q + 1) - (q^2 + q + 1)\alpha_3\text{.}
\end{align*}
Since $\lambda = 1$, $S\in\gaussm{V}{4}$ can contain at most $1$ block, implying $\alpha_3\in\{0,1\}$.
Thus, the two possible intersection vectors are
\[
	(q^8 - q^7 + q^3,\ q^7 + q^6 + q^5 - q^3 - q^2 - q,\ q^4 + q^3 + 2q^2 + q + 1,\ 0)
\]
and
\[
	(q^8 - q^7,\ q^7 + q^6 + q^5,\ q^4 + q^3 + q^2,\ 1)\text{.}
\]
Let $a_i$ ($i\in\{0,1\}$) be the number of $S\in\gaussm{V}{4}$ of the first and the second intersection vectors, respectively.
Double counting the flags $(B,S)\in D\times \gaussm{V}{4}$ with $B < D$ yields
\[
	a_1 = \#D \cdot \gauss{4}{1}{q} = \Phi_2\Phi_4\Phi_6\Phi_7
\]
and thus
\[
	a_0 = \gauss{7}{4}{q} - a_1 = q^4\Phi_6\Phi_7\text{.}
\]

For $s = 3$, the two possible intersection vectors and their frequencies are computed similarly.
For each $s\in\{0,1,2,5,6,7\}$, the intersection vector is uniquely determined by Lemma~\ref{lma:alpha_unique}.
The result is shown in Table~\ref{tbl:fano_intersection_vectors_q}.
For the important special cases $q = 2$ and $q = 3$, the evaluated numbers are shown in Table~\ref{tbl:fano_intersection_vectors_2} and Table~\ref{tbl:fano_intersection_vectors_3}, respectively.
For $s = 3$, we denote the two possible types of subspaces $S$ by $3_0$ (those with $\alpha_3(S) = 0$) and $3_1$ (the blocks with $\alpha_3(S) = 1$).
Similarly, the two different types of blocks of dimension $s = 4$ will be denoted by $4_0$ and $4_1$.

\begin{table}
\centering
	$\begin{array}{llllll}
		\#\text{subspaces } S & s & \alpha_0(S) & \alpha_1(S) & \alpha_2(S) & \alpha_3(S) \\
		\hline
		1                    & 7       & 0       & 0       & 0       & \Phi_6\Phi_7 \\
		\Phi_7               & 6       & 0       & 0       & q^4 \Phi_3\Phi_6 & \Phi_2\Phi_4\Phi_6 \\
		\Phi_3\Phi_6\Phi_7   & 5       & 0       & q^8     & q^3\Phi_2\Phi_4 & \Phi_4 \\
		\Phi_2\Phi_4\Phi_6\Phi_7       & 4       & q^7\Phi_1 & q^5\Phi_3 & q^2\Phi_3 & 1 \\
		q^4\Phi_6\Phi_7      & 4       & q^3(q^5 - q^4 + 1) & q\Phi_1\Phi_2\Phi_3\Phi_4 & \Phi_3\Phi_4 & 0 \\
		\Phi_6\Phi_7 & 3 & q^4\Phi_4\Phi_2\Phi_1 & q^2\Phi_3\Phi_4 & 0 & 1  \\
	        q\Phi_2\Phi_4\Phi_6\Phi_7 & 3 & q^3 (q^5 - q + 1) & q(q^3 + q - 1)\Phi_3 & \Phi_3 & 0 \\

	        \Phi_3\Phi_6\Phi_7 & 2 & q^6\Phi_4 & q^2\Phi_2\Phi_4 & 1 & 0\\
		\Phi_7 & 1 & q^3 \Phi_2\Phi_4\Phi_6 & \Phi_3\Phi_6 & 0 & 0 \\
		1      & 0 & \Phi_6\Phi_7 & 0 & 0 & 0
	\end{array}$
	\caption{Intersection vector distribution of a $2$-$(7,3,1)_q$ design}
	\label{tbl:fano_intersection_vectors_q}
\end{table}

\begin{table}
\centering
	$\begin{array}{llllll}
		\#\text{subspaces } S & s & \alpha_0(S) & \alpha_1(S) & \alpha_2(S) & \alpha_3(S) \\
		\hline
		1                     & 7       & 0       & 0       & 0       & 381     \\
		127                   & 6       & 0       & 0       & 336     & 45      \\
		2667                  & 5       & 0       & 256     & 120     & 5       \\
		5715                  & 4       & 128     & 224     & 28      & 1       \\
		6096                  & 4       & 136     & 210     & 35      & 0       \\
		381                   & 3       & 240     & 140     & 0       & 1       \\
		11430                 & 3       & 248     & 126     & 7       & 0       \\
		2667                  & 2       & 320     & 60      & 1       & 0       \\
		127                   & 1       & 360     & 21      & 0       & 0       \\
		1                     & 0       & 381     & 0       & 0       & 0
	\end{array}$
	\caption{Intersection vector distribution of a $2$-$(7,3,1)_2$ design}
	\label{tbl:fano_intersection_vectors_2}
\end{table}

\begin{table}
\centering
	$\begin{array}{llllll}
		\#\text{subspaces } S & s & \alpha_0(S) & \alpha_1(S) & \alpha_2(S) & \alpha_3(S) \\
		\hline
		1                     & 7       & 0       & 0       & 0       & 7651    \\
		1093                  & 6       & 0       & 0       & 7371    & 280     \\
		99463                 & 5       & 0       & 6561    & 1080    & 10      \\
		306040                & 4       & 4374    & 3159    & 117     & 1       \\
		619731                & 4       & 4401    & 3120    & 130     & 0       \\
		7651                  & 3       & 6480    & 1170    & 0       & 1       \\
		918120                & 3       & 6507    & 1131    & 13      & 0       \\
		99463                 & 2       & 7290    & 360     & 1       & 0       \\
		1093                  & 1       & 7560    & 91      & 0       & 0       \\
		1                     & 0       & 7651    & 0       & 0       & 0
	\end{array}$
	\caption{Intersection vector distribution of a $2$-$(7,3,1)_3$ design}
	\label{tbl:fano_intersection_vectors_3}
\end{table}

\begin{theorem}
	Let $q$ be a prime power.
	The existence of a $2$-$(7,3,1)_q$ subspace design implies the existence of a $2$-$(7,3,q^4)_q$ design.
\end{theorem}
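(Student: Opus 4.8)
The plan is to construct the desired design explicitly from $D$ by dualizing the $4$-subspaces that contain no block. Concretely, I would fix a non-degenerate bilinear form on $V$ and set
\[
    D' = \left\{ U^\perp \mid U\in\gauss{V}{4}{q}\text{ and }U\text{ contains no block of }D\right\}\text{.}
\]
Since $\perp$ is an inclusion-reversing bijection on $\mathcal{L}(V)$, every $U^\perp$ is a $3$-subspace, so $D'$ is a set of blocks of the correct dimension. These $U$ are exactly the $4$-subspaces of type $4_0$ in Table~\ref{tbl:fano_intersection_vectors_q}, of which there are $q^4\Phi_6\Phi_7 = q^4\lambda_0$; this already matches the block number of a putative $2$-$(7,3,q^4)_q$ design, which is reassuring.

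The main step is to verify the incidence property: every $2$-subspace $L$ lies in exactly $q^4$ members of $D'$. Using $\perp$-duality, $L\leq U^\perp$ is equivalent to $U\leq L^\perp$, and $L^\perp$ is a $5$-subspace. Hence it suffices to show that every $5$-subspace $M$ of $V$ contains exactly $q^4$ four-subspaces that contain no block of $D$. I would prove this by complementary counting. The total number of $4$-subspaces of $M$ is $\gauss{5}{4}{q}=\gauss{5}{1}{q}=q^4+q^3+q^2+q+1$, so it remains to show that exactly $\gauss{4}{1}{q}=q^3+q^2+q+1$ of them contain a block.

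For this count, two ingredients suffice. First, since $\lambda_2=1$, no $4$-subspace can contain two distinct blocks: two blocks inside a common $4$-space would meet in a $2$-subspace, which would then lie in two blocks. Therefore the block-containing $4$-subspaces of $M$ are counted without repetition by summing, over the blocks $B\leq M$, the number of intermediate $4$-subspaces $U$ with $B\leq U\leq M$, namely $\gauss{5-3}{4-3}{q}=\gauss{2}{1}{q}=q+1$ each. Second, by Lemma~\ref{lma:alpha_unique} applied with $s=5$ (so that $v-s=2\leq t$), the number of blocks contained in $M$ is $\alpha_3(M)=q^2+1$. Multiplying gives $(q+1)(q^2+1)=q^3+q^2+q+1$ block-containing $4$-subspaces in $M$, and subtracting from $\gauss{5}{1}{q}$ leaves exactly $q^4$, as required.

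The only real subtlety, and the main obstacle, is the bookkeeping that makes the complementary count exact: one must know that the $4$-subspaces of $M$ split cleanly into those containing exactly one block and those containing none, with no $4$-space containing two. This is precisely where the Steiner property $\lambda_2=1$ enters, while the value $\alpha_3(M)=q^2+1$ supplied by the intersection structure for $s=5$ is the other essential input; everything else is the arithmetic identity $\gauss{5}{1}{q}-(q+1)(q^2+1)=q^4$. I would conclude by noting that distinct $U$ yield distinct $U^\perp$, so the $q^4$ blocks through each $L$ are genuinely distinct and $D'$ is a (simple) $2$-$(7,3,q^4)_q$ subspace design.
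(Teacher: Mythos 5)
Your proposal is correct and follows essentially the same route as the paper: both identify the blocks of the new design as the duals of the $4$-spaces containing no block (type $4_0$), reduce the incidence condition via $\perp$ to counting such $4$-spaces inside a $5$-space, and obtain $q^4$ by subtracting the $(q^2+1)(q+1)$ block-containing $4$-spaces (using $\alpha_3=q^2+1$ for $s=5$ and the $\gauss{2}{1}{q}$ intermediate spaces per block) from $\gauss{5}{4}{q}$. Your explicit justification via $\lambda_2=1$ that no $4$-space contains two blocks makes precise a point the paper leaves implicit, but the argument is the same.
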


\begin{proof}
Let $S$ be a $5$-space in $V$.
By the unique intersection vector for $s = 5$, there are $\Phi_4 = q^2 + 1$ blocks contained in $S$.
A $4$-space $W \leq S$ is of type $4_1$ if and only if it contains one of those blocks.
For each such block $B$, there are $\gauss{5-3}{4 - 3}{q} = \gauss{2}{1}{q} = q + 1$ intermediate $4$-spaces $W$ with $B \leq W \leq S$.
This gives us the number of spaces of type $4_1$ in $S$ as $(q^2 + 1)(q + 1) = q^3 + q^2 + q + 1$.
Therefore, the number of spaces of type $4_0$ in $S$ is $\gauss{5}{4}{q} - (q^3 + q^2 + q + 1) = q^4$.
This shows that $\{S^\perp \mid S \in \gaussm{V}{4} \text{ of type } 4_0\}$ forms a $2$-$(7,3,q^4)_{q}$ design.
\end{proof}

\begin{remark}
The blocks of the original $2$-$(7,3,1)_q$ design are given by the spaces of type $3_1$, and the above proof shows that after dualization, the spaces of type $4_0$ form the blocks of a $2$-$(7,3,q^4)_q$ design.
Similarly, the spaces of type $3_0$ are the blocks of a $2$-$(7,3,q^4 + q^3 + q^2 + q)_q$ design and after dualization, the spaces of type $4_1$ are the blocks of a $2$-$(7,3,q^3 + q^2 + q + 1)_q$ design.
However, these are just the complementary designs of the ones arising from the spaces $3_1$ and $4_0$, respectively.
\end{remark}

The resulting ``intersection structure'' of a $2$-analog of the Fano plane is shown in Figure~\ref{fig:fano_intersection_structure_2}.
We explain by a few examples how to read this figure:
The entry $(128,224,28,1)^{5715}$ on the level $s=4$ means that there are $5715$ subspaces of dimension $s=4$ having the intersection vector $(128,224,28,1)$ (the subspaces of type $4_1$).
It is connected by a line to the intersection vector $(248,126,7,0)$ (type $3_0$) because a subspace of type $4_1$ contains subspaces of type $3_0$.
More precisely, the number $14$ at the $4_1$-end of the line tells us that each subspace of type $4_1$ contains exactly $14$ subspaces of type $3_0$.
Similarly, the number $7$ at the $3_0$-end means that each subspace of type $3_0$ is contained in exactly $7$ subspaces of type $4_1$.

\begin{figure}
	\centering
\begin{tikzpicture}[auto]
  \node at (-4,0) {$s=0$};
  \node (D0) at (2,0) {$(381,0,0,0)^1$};
  \node at (-4,2) {$s=1$};
  \node (D1) at (2,2) {$(360,21,0,0)^{127}$};
  \node at (-4,4) {$s=2$};
  \node (D2) at (2,4) {$(320,60,1,0)^{2667}$};
  \node at (-4,6) {$s=3$};
  \node (D31) at (0,6) {$(240,140,0,1)^{381}$};
  \node (D32) at (4,6) {$(248,126,7,0)^{11430}$};
  \node at (-4,8) {$s=4$};
  \node (D41) at (0,8) {$(128,224,28,1)^{5715}$};
  \node (D42) at (4,8) {$(136,210,35,0)^{6096}$};
  \node at (-4,10) {$s=5$};
  \node (D5) at (2,10) {$(0,256,120,5)^{2667}$};
  \node at (-4,12) {$s=6$};
  \node (D6) at (2,12) {$(0,0,336,45)^{127}$};
  \node at (-4,14) {$s=7$};
  \node (D7) at (2,14) {$(0,0,0,381)^1$};
  \draw (D0) to node[very near start,font=\scriptsize,swap]{$127$} node[very near end,font=\scriptsize]{$1$} (D1);
  \draw (D1) to node[very near start,font=\scriptsize,swap]{$63$} node[very near end,font=\scriptsize]{$3$} (D2);
  \draw (D2) to node[very near start,font=\scriptsize,swap]{$1$} node[very near end,font=\scriptsize]{$7$} (D31);
  \draw (D2) to node[pos=0.35,font=\scriptsize,swap]{$30$} node[pos=0.65,font=\scriptsize]{$7$} (D32);
  \draw (D31) to node[very near start,font=\scriptsize,swap]{$15$} node[very near end,font=\scriptsize]{$1$} (D41);
  \draw (D32) to node[very near start,font=\scriptsize,swap]{$7$} node[very near end,font=\scriptsize]{$14$} (D41);
  \draw (D32) to node[very near start,font=\scriptsize,swap]{$8$} node[very near end,font=\scriptsize]{$15$} (D42);
  \draw (D41) to node[pos=0.35,font=\scriptsize,swap]{$7$} node[pos=0.65,font=\scriptsize]{$15$} (D5);
  \draw (D42) to node[very near start,font=\scriptsize,swap]{$7$} node[very near end,font=\scriptsize]{$16$} (D5);
  \draw (D5) to node[very near start,font=\scriptsize,swap]{$3$} node[very near end,font=\scriptsize]{$63$} (D6);
  \draw (D6) to node[very near start,font=\scriptsize,swap]{$1$} node[very near end,font=\scriptsize]{$127$} (D7);
\end{tikzpicture}
	\caption{Intersection structure of a $2$-$(7,3,1)_2$ design}
	\label{fig:fano_intersection_structure_2}
\end{figure}
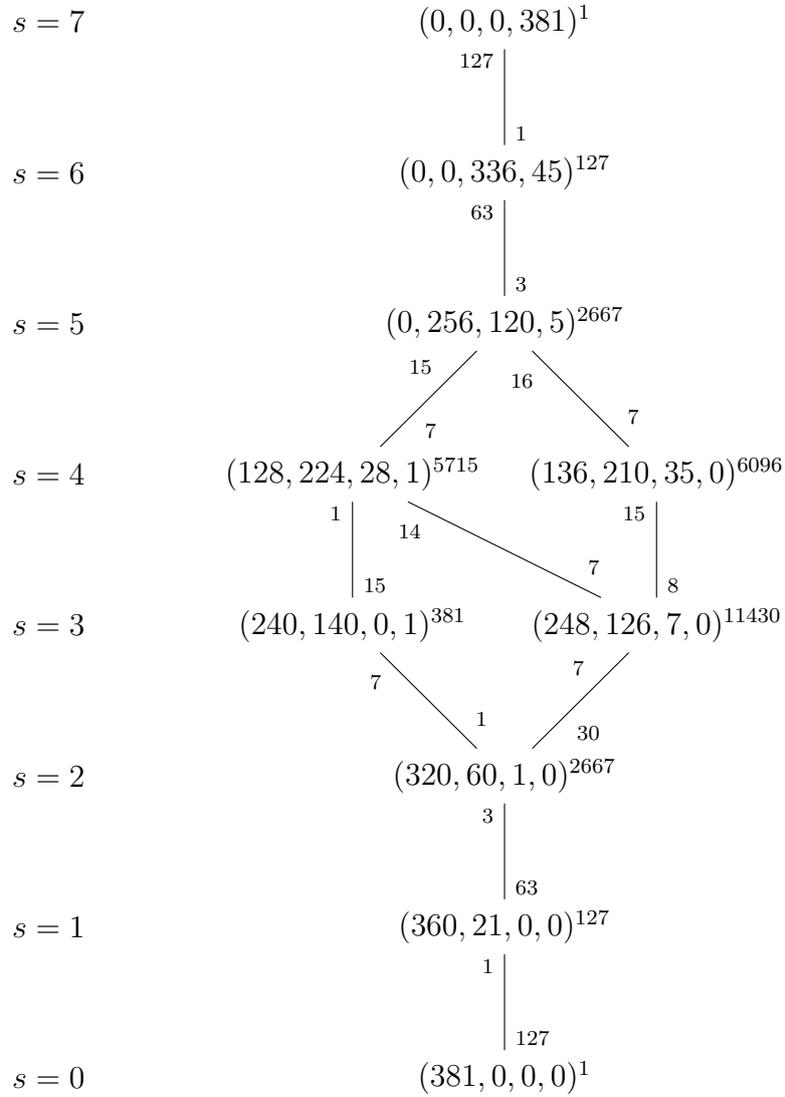

\section*{Acknowledgement}
The authors would like to acknowledge the financial support provided by COST -- \emph{European Cooperation in Science and Technology}. Both authors are members of the Action IC1104 \emph{Random Network Coding and Designs over GF(q)}.
This research was carried out during a 6 week stay of the first author at the University of Zagreb, which was supported by an STSM grant of the COST project.

We would like to express our gratitude to R. Laue for pointing out this problem, giving us hints for references and sharing some very useful thoughts with us.

\printbibliography
\end{document}